\newlength{\dhatheight}
\theoremstyle{definition}
\newtheorem{theorem}{Theorem}[section]
\newtheorem{conjecture}[theorem]{Conjecture}
\newtheorem{proposition}[theorem]{Proposition}
\newtheorem{lemma}[theorem]{Lemma}
\newtheorem{definition}[theorem]{Definition}
\newtheorem{fact}[theorem]{Fact}
\newtheorem{question}[theorem]{Question}
\newcommand{\bul}{\bullet}
\newlist{pcases}{enumerate}{1}
\setlist[pcases]{
  label={\em{Case~\arabic*:}}\protect\thiscase.~,
  ref=\arabic*,
  align=left,
  labelsep=0pt,
  leftmargin=0pt,
  labelwidth=0pt,
  parsep=0pt
}
\newcommand{\case}[1][]{%
  \if\relax\detokenize{#1}\relax
    \def\thiscase{}%
  \else
    \def\thiscase{~#1}%
  \fi
  \item
}
\begin{document}
\title[Straight Knots]
{Straight Knots}

\author[N.~Owad]{Nicholas Owad}
\address{Topology and Geometry of Manifolds Unit\\
Okinawa Institute of Science and Technology Graduate University\\
Okinawa, Japan 904-0495}
\email{nicholas.owad@oist.jp}
\thanks{2016 {\em Mathematics Subject Classification}. 57M25}

\begin{abstract}
Jablan and Radovi\'c originally defined two invariants called the Meander number and OGC number of knots for certain classes of knots.  We generalize these definitions to all knots and name the straight number and contained straight number of a knot, respectively, and prove they are well defined.   We answer two questions and prove a generalization of a conjecture of Jablan and Radovi\'c.  We also give some relations to crossing number and petal number.  Then we compute the straight numbers for all the knots in the standard knot table and present some interesting questions and the complete table of knots with 10 or fewer crossing and their straight number and contained straight number.
\end{abstract}

\maketitle



\section{Introduction}\label{sec:intro}


Knot diagrams are most commonly drawn with the minimum number of crossings.  This is how they appear in the knot table in Rolfsen \cite{Rolfsen} which is often referred to as the standard knot table.  Other common ways of presenting knots are with braids closures, in bridge position, thin position, and numerous others.  Recently, Adams et al. introduced \"ubercrossing and petal diagrams \cite{Multi}.  From most of these presentations of diagrams, invariants are created which are interesting in their own respect.   They are also useful for relations to other invariants and can help us understand different properties of knots.  In this paper, we introduce a new presentation for a knot diagram, called straight position and two new invariants, the straight number and the contained straight number.  These invariants are very closely related to the {\em meander number} and {\em OGC} defined by Jablan and Radovic \cite{JR}.

Gauss is said to have known the following fact, which is presented by Adams, Shinjo, and Tanka in \cite{AST}.

\begin{theorem}\label{thm:AST}{\cite[Theorem 1.2]{AST}}
Every knot has a projection that can be decomposed into two sub-arcs such that each sub-arc never crosses itself.
\end{theorem}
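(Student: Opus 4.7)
I would prove this theorem by explicit construction: for any given knot $K$, I would exhibit a diagram with the claimed decomposition.

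\textit{Reformulation.} The property in question is equivalent to a combinatorial condition on the Gauss sequence of the diagram. Traversing the diagram and recording each crossing yields a cyclic double-occurrence word in $n$ letters. The diagram admits a decomposition into two non-self-crossing sub-arcs if and only if one can place two cut-points in this cyclic word so that each crossing label appears exactly once on each side. So the task reduces to exhibiting, for each knot, a diagram whose Gauss sequence admits such a ``balanced'' cut.

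\textit{Construction.} Starting with $K \subset \R^3$, I choose two points $P, Q \in K$ dividing $K$ into two simple 3-dimensional arcs $\alpha, \beta$. Each of $\alpha, \beta$ is individually ambient-isotopic to a planar arc. I would perform an ambient isotopy of $\R^3$ to place $\alpha$ near a plane $\Pi_\alpha$ and $\beta$ near a plane $\Pi_\beta$, in such a way that they meet only at the points $P, Q$. I would then project $K$ onto a plane transverse to both $\Pi_\alpha$ and $\Pi_\beta$. Since the projection direction is not parallel to either plane, each of $\alpha, \beta$ would project to a simple planar arc, and all crossings in the projected diagram would be between $\alpha$ and $\beta$ — exactly the decomposition required.

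\textit{Main obstacle.} The crux is ensuring the isotopy can actually be carried out for any $K$. A strict ``two-plane'' configuration with the planes meeting along a line is too rigid, since a knot lying in such a 2-page book is forced to be the unknot. To get around this, one should use a richer configuration — for instance, an arc presentation of $K$ (available for any knot) with several half-planes sharing a common binding axis — and then group the half-planes into two alternating classes whose unions of arcs are $\alpha$ and $\beta$. The technical heart of the proof is then to verify that each composite arc projects to an embedded curve, which requires a careful choice of projection direction and possibly small perturbations of the arcs off the half-planes; equivalently, in Gauss-sequence language, one must check that the cyclic word read off from the arc presentation splits as claimed after grouping consecutive half-planes into the two classes.
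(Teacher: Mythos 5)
The paper does not prove this statement: it is quoted directly from Adams--Shinjo--Tanaka \cite[Theorem 1.2]{AST} (and attributed there to Gauss), so there is no in-paper proof to compare against; your proposal has to stand on its own. Your reformulation in terms of cutting the cyclic Gauss word at two points so that every crossing label appears once on each side is correct, and you are right to discard the naive two-plane construction. The problem is that the arc-presentation fix does not close the gap you yourself identify as ``the technical heart''; it restates the theorem rather than proving it.

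Concretely, two things fail for a generic arc presentation and a generic grouping of its pages into two classes. First, the union of the arcs lying in one class of pages is a \emph{connected} sub-arc of $K$ only if the knot visits all pages of that class consecutively; nothing in your construction arranges this, and an ``alternating'' grouping around the binding axis makes it less likely, not more. Second, even granting connectivity, the composite arc need not project to an embedded curve: two arcs in distinct pages of the same class have endpoints on the binding axis, and after projection their images must intersect whenever their endpoint intervals on the axis interleave, so embeddedness requires each class's intervals to form a non-interleaving (laminar) family. Producing, for an arbitrary knot, an arc presentation together with a splitting of its pages into two consecutive-along-the-knot classes each with laminar intervals is essentially equivalent to the statement being proved --- it is very close to the ``contained straight position'' studied in this paper. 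What is missing is an actual mechanism (for instance, an induction that strictly reduces the number of self-crossings of one of the two sub-arcs by re-drawing the diagram) that terminates for every knot; without it the proposal defers precisely the step that carries all the content, and for that step one must go to the argument in \cite{AST}.
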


From this result, via a planar isotopy, one can produce a diagram with a single straight strand that contains all of the crossings. By convention, we will draw this straight arc horizontally.

\begin{figure}
\begin{center}
\begin{tikzpicture}[scale=.4]


\begin{scope}[xshift = -10cm]
\draw [ultra thick,gray] (-.2,.-.2) to [out=40, in=-90] (1.5,2.4);

\draw [ultra thick, gray] (-.2,-.2) to [out=180+40, in=90] (-1.2,-1.6) to [out=-90, in=180-30] (0,-3.2);

\draw [line width=0.13cm, white] (0,-3.2) to [out=-30, in=-90] (3.5,-.6) to [out=90, in=-20] (1.6 ,2) to [out=160, in=20] (-1.3 ,2.1);
\draw [ultra thick, gray] (0,-3.2) to [out=-30, in=-90] (3.5,-.6) to [out=90, in=-20] (1.6 ,2) to [out=160, in=20] (-1.3 ,2.1);

\draw [ultra thick, gray] (.2,-.2) to [out=-40, in=120] (1.05,-1);
\draw [ultra thick] (1.05,-1) to [out=300, in=70] (1.1,-2.3);
\draw [line width=0.13cm, white] (1.1,-2.3) to [out=-110, in=30] (0,-3.2) to [out=210, in=-90] (-3.5,-.6) to [out=90, in=200] (-1.6 ,2);
\draw [ultra thick, gray] (1.1,-2.3) to [out=-110, in=30] (0,-3.2) to [out=210, in=-90] (-3.5,-.6) to [out=90, in=200] (-1.6 ,2);

\path [fill=black] (1.05,-1) circle [radius=0.18];
\path [fill=black] (1.1,-2.3) circle [radius=0.18];

\draw [line width=0.13cm, white] (-.2,.2) to [out=140, in=-90] (-1.5,2.4);
\draw [ultra thick, gray] (-.2,.2) to [out=140, in=-90] (-1.5,2.4);

\draw [ultra thick, gray] (-1.5,2.4) arc [radius=1.5, start angle=180, end angle= 0];
\end{scope}


\draw [line width=0.1cm, ->] (4,0) -- (6,0);
\draw [line width=0.1cm, ->] (14,0) -- (16,0);
\draw [line width=0.1cm, ->] (-6,0) -- (-4,0);

\draw [ultra thick] (-.2,.-.2) to [out=40, in=-90] (1.5,2.4);

\path [fill=white] (1.5 ,2.04) circle [radius=0.18];

\draw [ultra thick, gray] (-.2,-.2) to [out=180+40, in=90] (-1.2,-1.6) to [out=-90, in=180-30] (0,-3.2);

\draw [line width=0.13cm, white] (0,-3.2) to [out=-30, in=-90] (3.5,-.6) to [out=90, in=-20] (1.6 ,2) to [out=160, in=20] (-1.3 ,2.1);
\draw [ultra thick, gray] (0,-3.2) to [out=-30, in=-90] (3.5,-.6) to [out=90, in=-20] (1.6 ,2) to [out=160, in=20] (-1.3 ,2.1);
\path [fill=white] (0 ,-3.2) circle [radius=0.18];

\draw [ultra thick] (-1,-1.5) to [out=-10, in=210] (.5,-.3) to [out=-40, in=120] (1.05,-1);
\draw [ultra thick] (1.05,-1) to [out=300, in=70] (1.1,-2.3);
\draw [line width=0.13cm, white] (1.1,-2.3) to [out=-110, in=30] (0,-3.2) to [out=210, in=-90] (-3.5,-.6) to [out=90, in=200] (-1.6 ,2);
\draw [ultra thick, gray] (1.1,-2.3) to [out=-110, in=30] (0,-3.2) to [out=210, in=-90] (-3.5,-.6) to [out=90, in=200] (-1.6 ,2);

\path [fill=black] (-.2,-.2) circle [radius=0.18];
\path [fill=black] (1.1,-2.3) circle [radius=0.18];

\draw [ultra thick] (-1.5,2.4) arc [radius=1.5, start angle=180, end angle= 0];

\draw [line width=0.13cm, white]  (-1.4,-1.3) to [out=140, in=225] (-.5,.4) to [out=140, in=-90] (-1.5,2.4);
\draw [ultra thick]  (-1.4,-1.3) to [out=140, in=225] (-.5,.4) to [out=140, in=-90] (-1.5,2.4);

\begin{scope}[xshift = 10cm]

\draw [ultra thick] (1.6,-3.2) to [out=90, in=0] (.5,-2.7) to [out=180, in=-90] (-.5,-1.7) to [out=90, in=215] (.5,-.3) to [out=-40, in=120] (1.05,-1);


\draw [ultra thick] (-.2,.-.2) to [out=40, in=-90] (1.5,2.4);



%
\draw [ultra thick] (-.2,-.2) to [out=180+40, in=90] (-1.2,-1.6) to [out=-90, in=180-30] (0,-3.2);

\draw [ultra thick] (0,-3.2) to [out=-30, in=180] (.8,-3.4);


%
\draw [ultra thick] (1.05,-1) to [out=300, in=70] (1.1,-2.3);

\draw [ultra thick]  (1.6,-3.2) to [out=-90, in=0] (.7,-3.9) to [out=180, in=-40]  (-1.1,-3.2) to [out=140, in=-90]   (-1.8,-1.7) to [out=90, in=225] (-.5,.4) to [out=140, in=-90] (-1.5,2.4);

\draw [line width=0.13cm, white] (.8,-3.4) to [out=0, in=-90] (3.5,-.6) to [out=90, in=-20] (1.6 ,2) to [out=160, in=20] (-1.3 ,2.1);

\draw [line width=0.13cm, white] (1.1,-2.3) to [out=-110, in=30] (0,-3.2) to [out=210, in=-90] (-3.5,-.6) to [out=90, in=200] (-1.7 ,2);

\draw [ultra thick, gray] (1.1,-2.3) to [out=-110, in=30] (0,-3.2) to [out=210, in=-90] (-3.5,-.6) to [out=90, in=200] (-1.7 ,2);
\draw [ultra thick, gray] (.8,-3.4) to [out=0, in=-90] (3.5,-.6) to [out=90, in=-20] (1.6 ,2) to [out=160, in=20] (-1.3 ,2.1);

\draw [ultra thick] (-1.5,2.4) arc [radius=1.5, start angle=180, end angle= 0];

\draw [line width=0.13cm, white]     (-1.8,-1.7) to [out=90, in=225] (-.5,.4) to [out=140, in=-90] (-1.5,2.4);
\draw [ultra thick]   (-1.8,-1.7) to [out=90, in=225] (-.5,.4) to [out=140, in=-90] (-1.5,2.4);

\path [fill=black] (.8,-3.4) circle [radius=0.18];
\path [fill=black] (1.1,-2.3) circle [radius=0.18];

\end{scope}
\begin{scope}[xshift = 19cm, yshift=1cm, scale=.8]

\draw [ultra thick] (-3,0) arc [radius=3, start angle=180, end angle= 0];
\draw [ultra thick] (-3,0) arc [radius=11/2, start angle=180, end angle= 360];
\draw [ultra thick] (-2,0) arc [radius=2, start angle=180, end angle= 0];
\draw [ultra thick] (-2,0) arc [radius=9/2, start angle=180, end angle= 360];
\draw [ultra thick] (-1,0) arc [radius=1, start angle=180, end angle= 0];
\draw [ultra thick] (-1,0) arc [radius=7/2, start angle=180, end angle= 360];

\draw [ultra thick] (0,0) arc [radius=1/2, start angle=180, end angle= 360];

\draw [ultra thick] (2,0) arc [radius=3/2, start angle=180, end angle= 360];

\draw [ultra thick] (3,0) arc [radius=1/2, start angle=180, end angle= 360];

\draw [ultra thick] (4,0) arc [radius=1/2, start angle=180, end angle= 0];

\draw [ultra thick] (6,0) arc [radius=1, start angle=180, end angle= 0];

\draw [line width=0.13cm, white] (0.5,0) to (3.85,0);
\draw [line width=0.13cm, white] (4.15,0) to (6.5,0);

\draw [ultra thick, gray] (0,0) to (3.8,0);
\draw [ultra thick, gray] (4.2,0) to (7,0);

\path [fill] (0,0) circle [radius=0.07];
\path [fill] (7,0) circle [radius=0.07];

\end{scope}

\end{tikzpicture}
\end{center}
\caption{The Figure 8 knot, being drawn so that the grey and the black arc never cross themselves. The last diagram is obtained by bending the grey arc so it is straight, pulling the black arc and all crossings with it.  The first 3 diagrams of this figure are from \cite{AST}.}\label{fig:AST}
\end{figure}
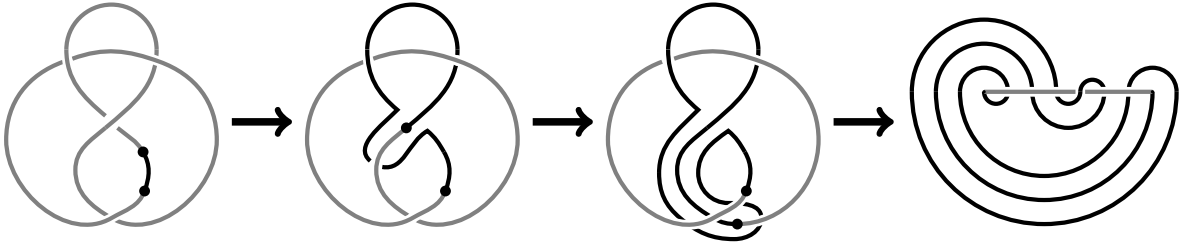


For precise definitions of the following terms, see Section \ref{sec:notation}. Once a diagram with $n$ crossings is in this straight position, we start at the left of the straight strand and move right, labeling the crossings 1 through $n$.  Continuing from the right end of the straight strand, list the crossing numbers we encounter if we were to continue traversing the knot.  Upon reaching the left end of the straight strand, we will have an $n$-tuple, with each number 1 to $n$ appearing once. We call this list the {\em straight word} of the diagram. The minimum number of crossings possible for a knot $K$ to have while in straight position is the {\em straight number} of the knot, $\mathtt{str} (K)$.  We produce a variation on straight knots, called contained straight knots, where we only look at diagrams which do not have arcs that pass around the side of the straight strand.  Again, the minimum number of crossings a knot diagram can have in contained straight position is the {\em contained straight number}, $\mathtt{cstr} (K)$.

\noindent{\bf{Theorem}~\ref{thm:cstr}.} 
{
The invariant, contained straight number,  $\mathtt{cstr}(K)$, is well defined.
}

The following question appears in Jablan and Radovi\'c \cite{JR}.

\begin{question} [\cite{JR}]
 Is it true that every (alternating) knot has a meander diagram?
\end{question}

 Theorem \ref{thm:cstr} answers this question  and, indeed, shows it is well defined for all knots, not just alternating knots.

A common question in knot theory is how invariants relate to each other.  Here, we present a bound on straight number, from the crossing number.

\noindent{\bf{Theorem}~\ref{thm:strANDcross}.} 
{
If $K$ is a knot with crossing number $c(K)=n$, then $\mathtt{str} (K)\leq 2^{n-1}-1$. 
}

We also relate the straight numbers and the petal number of a knot, $p(K)$.

\noindent{\bf{Theorem}~\ref{thm:strANDpetal}.}{ \begin{enumerate}
\item Given a knot $K$ with $\mathtt{str}(K) = n$, then $p(K) \leq 3n$. 
\item Given a knot $K$ with $\mathtt{cstr}(K) = n$, then $p(K) \leq 2n+3$. 
\end{enumerate}
}

As with Gauss Codes and DT codes, we must consider when these straight words produce classical knots or virtual knots.  

\noindent{\bf Theorem~\ref{thm:strExistsAlg}.} 
{
There exists an algorithm which can detect if a straight code is realizable as a classical knot.
}

We implement this algorithm in a python script and use the program SnapPy, \cite{snappy} by Culler, Dunfield, Goerner, and Weeks to identify the knots we obtain this way.  From this, we have calculated the straight number and contained straight number of the standard table of knots with 10 or fewer crossings.  These results are in a table at the end of the paper.

We also have the following results.
\newpage

\noindent{\bf Theorem~\ref{thm:4things}.}{
\begin{enumerate}
\item Every torus knot $T_{2,q}$ is perfectly straight.
\item Every $n$-pretzel knot is perfectly straight.
\item Every 2-bridge knot $K_{p/q}$ where the continued fraction of $p/q$ has length less than 6 is perfectly straight.  
\item Every knot with $7$ or less crossings is perfectly straight. 
\end{enumerate}
}

In the next section, we introduce relevant definitions.  In section 3, we find some relations to other invariants and in section 4, we discuss the calculations and algorithm mentioned in the previous paragraph.

{\bf Acknowledgements} The author would like to thank the staff at the high performance computing clusters at OIST for their help in troubleshooting the code used to produce the table at the end of this paper.  Also, the author wishes to thank Chaim Even Zohar for the suggestion to look at Jablan and Radovic's paper, \cite{JR}.

\smallskip


\section{Definitions and Well-definedness } \label{sec:notation}

We assume the reader is familiar with the basics of topology and knot theory, see \cite{Hatcher} and \cite{Rolfsen} for background.  Given a knot diagram, pick a point on the knot, which is not a crossing, pick a direction and traverse along the knot.  As we come to the first crossing, we label it `1' and continue on, labeling each crossing we encounter with the next consecutive number.  If a crossing has already been labeled, we skip it and move on.  This will result in an $n$ crossing knot with each crossing labeled once by the numbers 1 through $n$.   Listing the numbers of the crossings in order as we traverse the knot produces a $2n$-tuple.

\begin{definition} \label{def:knotword}
The {\em knot word} of a diagram is the $2n$-tuple we obtain by following the above directions.
\end{definition}

A quick note about knot words: For an $n$ crossing diagram, there are $4n$ knot words, which we obtain by picking one of the $2n$ segments of arcs between the crossings and then one of the 2 directions to travel.  These are related by a cyclic permutation or by reversing the order.

Using the definition and Theorem \ref{thm:AST}, there is a reasonable choice of starting position and direction at the beginning of one of the two colored arcs.  

\begin{definition} \label{def:strword}
A knot diagram is {\em straight} if it has a straight word of the form $$(1,2,3,\ldots,n-1,n, \sigma(1),\sigma(2),\sigma(3),\ldots,\sigma(n-1),\sigma(n))$$
 where $\sigma$ is an element of the symmetric group on $n$ elements, $S_n$. The {\em straight word} of a straight diagram is the $n$-tuple $$(\sigma(1),\sigma(2),\sigma(3),\ldots,\sigma(n-1),\sigma(n)).$$ 

\end{definition}

Note that in \cite{JR}, they refer to straight words as ``short Gauss Codes" or  ``short OGC Gauss Codes." The process required to make a knot straight can increases the number of crossings in the diagram.  Thus, we make a new invariant which captures this distinction.  

\begin{definition} \label{def:strnum}
The {\em straight number} of a knot $K$, $\mathtt{str}(K)$, is the minimum $n$ such that $K$ has an $n$ crossing straight diagram.
\end{definition}

This definition is well defined by Theorem \ref{thm:AST}, that is, every knot has a diagram that is straight.  Jablan and Radovi\'c make the following conjecture. 

\begin{conjecture}[\cite{JR}]\label{conj:JR}
Every rational knot is OGC knot.
\end{conjecture}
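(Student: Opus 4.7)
The plan is to derive this conjecture as an immediate corollary of Theorem \ref{thm:cstr}. Recall that an OGC knot in the sense of \cite{JR} is precisely a knot admitting a contained straight diagram. Theorem \ref{thm:cstr} asserts that $\mathtt{cstr}(K)$ is well-defined for every knot $K$, which by definition means every knot admits some contained straight diagram. Since rational (2-bridge) knots form a subclass of all knots, it follows a fortiori that every rational knot admits such a diagram, so every rational knot is OGC. In this sense Conjecture \ref{conj:JR} requires no argument of its own beyond the already-announced generalization; the substantive content is in Theorem \ref{thm:cstr}.

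Nevertheless, a direct proof intrinsic to the rational class is worth sketching, because it gives more explicit control over $\mathtt{cstr}(K_{p/q})$. The natural starting point is the standard 4-plat presentation of $K_{p/q}$ determined by a continued fraction expansion $[a_1,a_2,\dots,a_k]$ of $p/q$. This presentation has exactly two maxima, and the plan would be to choose one of the two over-arcs of the resulting bridge diagram as the candidate straight strand, isotope it into a horizontal segment, and drag all crossings onto it. One then has to verify the contained condition: that in the resulting diagram no arc escapes around either endpoint of the straight strand. For small $k$, this verification is finite case-checking, and is the content of Theorem \ref{thm:4things}(3), which handles $k<6$.

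The main obstacle in promoting this direct construction to arbitrary $k$ is an inductive argument on the length of the continued fraction. When one appends a new twist region $a_{k+1}$ to an already-straightened 4-plat, the naive pull-straight operation can force the new arcs to sweep around an endpoint of the straight strand, breaking the contained condition, and the only way to repair this is to introduce additional crossings on the straight strand that absorb the escaping arcs. Showing that this repair process does not itself propagate further escapes, uniformly in $k$, is the delicate step. Because Theorem \ref{thm:cstr} already settles the existence question at the level of arbitrary diagrams via a cleaner global argument, my preferred proof of the conjecture is the two-line corollary in the first paragraph, with the continued-fraction construction recorded only to obtain concrete upper bounds on $\mathtt{cstr}(K_{p/q})$ for specific rational knots.
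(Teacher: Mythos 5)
Your first paragraph is essentially the paper's argument, with one terminological slip. The paper glosses the Jablan--Radovi\'c vocabulary the other way around from you: ``OGC Gauss codes'' are identified with \emph{straight} words (see the remark after Definition \ref{def:strword}), while \emph{meander} diagrams are the ones corresponding to \emph{contained} straight diagrams. Accordingly, the paper disposes of Conjecture \ref{conj:JR} by observing that Definition \ref{def:strnum} is well defined --- i.e.\ every knot has a straight diagram, which is already immediate from Theorem \ref{thm:AST} --- whereas it reserves Theorem \ref{thm:cstr} for answering the separate question about meander diagrams. Your derivation from Theorem \ref{thm:cstr} is still logically valid, since a contained straight diagram is in particular a straight diagram, so the conclusion follows a fortiori; you have just invoked the stronger (and harder) result where the elementary one suffices. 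Your second and third paragraphs sketch a direct 4-plat construction but, as you concede, leave the inductive step on the length of the continued fraction unresolved; this is not part of any proof of the conjecture (the paper only verifies the perfectly-straight claim for continued fractions of length less than $6$ in Theorem \ref{thm:4things}), so it should be presented as a remark on explicit bounds rather than as an alternative argument.
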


Definition \ref{def:strnum} being well defined proves Conjecture \ref{conj:JR}, and indeed every knot is an ``OGC" knot.

Then a knot is said to be in {\em straight position} if we draw the diagram so that the crossings all occur on a single horizontal strand.  This makes the second subarc consist of only semicircles, with their centers on the straight strand.  Thus, we have the following.

\begin{theorem}
Every knot has a diagram that is composed of a single straight strand and collection of semicircles, all with their centers on the straight strand.
\end{theorem}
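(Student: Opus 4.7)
The plan is to derive this from Theorem~\ref{thm:AST} by two applications of planar ambient isotopy, exactly as the paragraph preceding the statement suggests. The scheme is: straighten the first sub-arc, then straighten each piece of the second.

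First, Theorem~\ref{thm:AST} provides a diagram whose underlying projection splits as the union of two sub-arcs $\alpha$ and $\beta$, sharing only their two common endpoints, with neither arc self-intersecting. Since $\alpha$ is an embedded arc in $\R^2$, the Schoenflies theorem (or an elementary thickening argument applied to a regular neighborhood of $\alpha$) supplies a planar ambient isotopy $\phi$ carrying $\alpha$ to a straight horizontal segment. Under $\phi$ the crossing data of the diagram is preserved, so after applying $\phi$ the arc $\alpha$ is straight and horizontal, $\phi(\beta)$ is still an embedded planar arc, and the $n$ crossings of the diagram are precisely the interior transverse intersections of $\phi(\beta)$ with $\alpha$.

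Next, label the intersections of $\phi(\beta)$ with $\alpha$ by $p_0,p_1,\ldots,p_n,p_{n+1}$, where $p_0$ and $p_{n+1}$ are the two common endpoints and $p_1,\ldots,p_n$ are the crossings in the interior of $\alpha$. Cutting $\phi(\beta)$ at these $n+2$ points yields $n+1$ sub-arcs, and each such sub-arc lies entirely in one of the two open half-planes bounded by $\alpha$, since its interior avoids $\alpha$. Because $\phi(\beta)$ is embedded, no two of the upper sub-arcs can have interleaved endpoints on $\alpha$ (otherwise one would be forced to cross another inside the closed upper half-plane); the upper sub-arcs therefore form a non-crossing chord system on the collinear points $\{p_i\}$, and similarly for the lower side.

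Finally, I would show by induction on the number of chords that a non-crossing chord system on collinear points can be simultaneously isotoped, via a planar ambient isotopy fixing $\alpha$, to the system of semicircles over the corresponding diameters. At each stage, pick an innermost chord: its diameter interval contains no other chord endpoint in its interior, so the disk it cobounds with that diameter contains no other part of $\phi(\beta)$, and the chord may be freely isotoped to the semicircle over that diameter. Removing it and repeating finishes the isotopy on one side; performing the same procedure in the other half-plane completes the proof. The only real obstacle is this last simultaneous-realization step, and the innermost-chord induction handles it cleanly; everything else is routine planar isotopy.
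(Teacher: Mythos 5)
There is a genuine gap in the middle step. You assert that each sub-arc of $\phi(\beta)$ cut off by consecutive intersection points with $\alpha$ ``lies entirely in one of the two open half-planes bounded by $\alpha$, since its interior avoids $\alpha$.'' This does not follow: $\alpha$ is a compact segment, not a line, so $\R^2 \setminus \alpha$ is \emph{connected}, and a sub-arc whose interior misses $\alpha$ can still pass from the upper to the lower side by travelling around an endpoint of the straight strand. These are exactly the \emph{uncontained arcs} that the paper introduces immediately after this theorem (see Figure~\ref{fig:extendedSS} and Fact~\ref{uandc}); the last panel of Figure~\ref{fig:AST} already exhibits one for the figure-eight knot. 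Indeed, if your claim were true, every straight diagram would automatically be contained and one would get $\mathtt{str}(K)=\mathtt{cstr}(K)$ for all $K$, contradicting the table (e.g.\ $\mathtt{str}(4_1)=4$ while $\mathtt{cstr}(4_1)=5$). So the ``non-crossing chord system'' picture, and the innermost-chord induction built on it, only applies to the contained sub-arcs.

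The statement itself is still within reach because its conclusion only demands semicircles centered on the (line containing the) straight strand, not that every arc stay on one side. The repair is the one the paper carries out: isotope each sub-arc that goes around an end of $\alpha$ so that it meets the extended straight strand exactly once (an arc meeting it twice can be pushed off entirely), and realize each such arc as \emph{two} semicircles joined at a point of the extended straight strand, one in each half-plane; the genuinely one-sided sub-arcs are then handled by your chord argument. With that case added, your Schoenflies straightening of $\alpha$, the non-interleaving observation for same-side arcs, and the innermost-chord induction are all sound, and together they give a complete proof (the paper itself offers essentially no proof, treating the theorem as immediate from the definition of straight position).
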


We can make another definition by considering the number of times our knot must pass from the top of the diagram to the bottom but does not cross the straight strand.  To do this, the arc must cross the {\em extended straight strand}, which is obtained in the diagram by continuing the horizontal segment that is the straight strand, though not including it. See Figure \ref{fig:extendedSS}.

\begin{figure}
\begin{center}
\begin{tikzpicture}[scale=.4]

\draw [ultra thick] (-3,0) arc [radius=3, start angle=180, end angle= 0];
\draw [ultra thick] (-3,0) arc [radius=11/2, start angle=180, end angle= 360];
\draw [ultra thick] (-2,0) arc [radius=2, start angle=180, end angle= 0];
\draw [ultra thick] (-2,0) arc [radius=9/2, start angle=180, end angle= 360];
\draw [ultra thick] (-1,0) arc [radius=1, start angle=180, end angle= 0];
\draw [ultra thick] (-1,0) arc [radius=7/2, start angle=180, end angle= 360];

\draw [ultra thick] (0,0) arc [radius=1/2, start angle=180, end angle= 360];

\draw [ultra thick] (2,0) arc [radius=3/2, start angle=180, end angle= 360];

\draw [ultra thick] (3,0) arc [radius=1/2, start angle=180, end angle= 360];

\draw [ultra thick] (4,0) arc [radius=1/2, start angle=180, end angle= 0];

\draw [ultra thick] (6,0) arc [radius=1, start angle=180, end angle= 0];

\draw [line width=0.13cm, white] (0.5,0) to (3.85,0);
\draw [line width=0.13cm, white] (4.15,0) to (6.5,0);

\draw [ultra thick] (0,0) to (3.85,0);

\draw [ultra thick] (4.15,0) to (7,0);
\path [fill] (0,0) circle [radius=0.055];
\path [fill] (7,0) circle [radius=0.055];

\draw [ultra thick, dashed, gray] (-0.2,0) -- (-4,0);
\draw [ultra thick, dashed, gray] (7.2,0) -- (11,0);

\end{tikzpicture}
\end{center}
\caption{The grey dahsed line is called the {\em extended straight strand}.}\label{fig:extendedSS}
\end{figure}
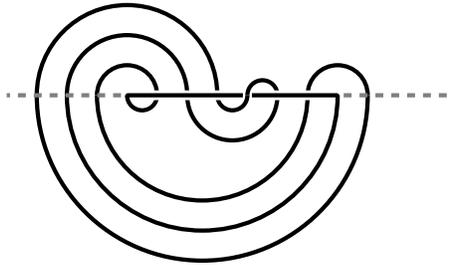

\begin{definition} \label{def:containedstrnum}
The {\em contained straight number} of a knot $K$, $\mathtt{cstr}(K)$, is the minimum $n$ such that $K$ has an $n$ crossing straight diagram where no arcs cross the extended straight strand.
\end{definition}

It is somewhat surprising that every knot has a contained straight position diagram.  But first, we notice a useful fact. For this argument, an arc will be a segment of the knot between two consecutive crossings.  Given a knot, find a diagram in straight position.  Assume there are arcs crossing the extended straight strand.  Call these arcs {\em uncontained arcs} and the other arcs call {\em contained arcs}.  Let $u$ and $c$ be the number of each type of these arcs in the diagram, respectively.  Note that any arc that crosses the extended straight strand twice does not have to cross it at all.  Reposition the other uncontained arcs so they all cross the left side of the extended straight strand. The last two sentences can be seen as isotopy of the diagram on the sphere, see Figure \ref{fig:twoTOone} and \ref{fig:allONleft}. Then every uncontained arc is made up of two semicircles and the contained arcs are a single semicircle.  Every contained and uncontained arc contributes exactly one crossing to the diagram, except the last arc, which is guaranteed to be contained, thus there are $c+u-1$ crossings in this diagram.  

\begin{fact}\label{uandc}
For a diagram in straight position with $u$ uncontained arcs and $c$ contained arcs, there are $c+2u$ semicircles and $c+u-1$ crossings.
\end{fact}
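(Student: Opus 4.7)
The plan is to formalize the counting sketched in the paragraph preceding the statement, handling the two equalities by separate combinatorial counts after reducing to the normalized form of the diagram.

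For the crossing count, I would view the second subarc as a continuous path from the right endpoint of the straight strand back to the left endpoint. By Definition \ref{def:strword}, this path meets each of the $n$ crossings exactly once, since $\sigma \in S_n$ is a permutation and thus visits every label. The $n$ crossings therefore subdivide the second subarc into $n+1$ consecutive pieces; by hypothesis each piece is either contained or uncontained, so $c + u = n + 1$, which gives $n = c + u - 1$.

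For the semicircle count, after performing the two planar isotopies described just above the statement -- first, absorbing into the contained class any arc that crosses the extended strand twice (since it does not have to cross it at all), and second, sliding every remaining uncontained arc so that it crosses the extended strand exactly once on the left of the left endpoint -- each contained arc is a single semicircle with both endpoints on the straight strand, while each uncontained arc decomposes into exactly two semicircles joined at the one point where it meets the extended strand. Summing contributions gives $c \cdot 1 + u \cdot 2 = c + 2u$.

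The main subtlety, and the step that deserves the most care in a full write-up, is the bookkeeping during normalization: absorbing a double-crossing arc converts it from uncontained to contained, so the identities in the Fact should be understood as applying to the post-normalization values of $c$ and $u$. Once this convention is fixed, both counts are purely combinatorial and require no further input beyond the straight-position setup and the planar/spherical isotopies already justified in the surrounding text.
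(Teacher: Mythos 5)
Your proposal is correct and is essentially the argument the paper itself gives (in the paragraph immediately preceding the Fact): the paper likewise normalizes the uncontained arcs via the two isotopies, counts one semicircle per contained arc and two per uncontained arc, and gets $c+u-1$ crossings by assigning one crossing to each arc except the final (necessarily contained) one, which is just the dual phrasing of your observation that $n$ crossings cut the second subarc into $n+1$ pieces. Your explicit remark that the identities refer to the post-normalization values of $c$ and $u$ is a useful clarification that the paper leaves implicit.
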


\begin{theorem}\label{thm:cstr}
The invariant, contained straight number,  $\mathtt{cstr}(K)$, is well defined.
\end{theorem}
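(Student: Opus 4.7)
The plan is to show that every knot admits some straight diagram in which no arc crosses the extended straight strand; once that is established, the invariant $\mathtt{cstr}(K)$ is the minimum of a nonempty set of positive integers and therefore well defined. By Theorem~\ref{thm:AST}, every knot $K$ already admits a straight diagram $D$, so the work is to eliminate the uncontained arcs of $D$ (at the price of possibly increasing the crossing number).

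First, I would reduce to a normal form using only sphere isotopy, as already indicated in the paragraph preceding Fact~\ref{uandc}. Any uncontained arc that crosses the extended straight strand twice can be slid off entirely, since both crossings occur with the same simple arc (the extended strand is unknotted and has no diagrammatic data on it). After this reduction every uncontained arc crosses the extended strand exactly once. A further sphere isotopy, rotating the ``point at infinity'' across the straight strand as needed, lets me assume all remaining uncontained arcs cross on the same side, say the left. At this stage the number of crossings is unchanged.

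Next, I would show that each remaining uncontained arc can be converted into a contained arc by a local modification that still yields a straight diagram of the same knot. Pick an uncontained arc $\alpha$ that loops around the left end of the straight strand, connecting a point on top of the strand near crossing $i$ with a point on the bottom near crossing $j$. Extend the straight strand to the left and perform a single Reidemeister~II move between $\alpha$ and this new horizontal extension: push the portion of $\alpha$ that lies to the left of the old strand so that it crosses the extension twice. This introduces two new crossings on the (now longer) straight strand, and splits $\alpha$ into three subarcs, each of which is a semicircle lying entirely above or below the strand. In particular the new diagram is still in straight position, represents the same knot $K$, and has one fewer uncontained arc. Iterating over the (finitely many) uncontained arcs produces, after finitely many steps, a straight diagram with $u=0$, i.e.\ a contained straight diagram of $K$.

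The main obstacle is verifying the local move in the second step: I must check carefully that, after pushing $\alpha$ across the horizontal extension, (i)~every crossing still lies on a single horizontal strand (so that the result is literally in straight position rather than merely isotopic to one), (ii)~each newly created arc is a semicircle with its endpoints on that strand, and (iii)~the move strictly decreases $u$ and does not reintroduce an uncontained arc elsewhere. Once these checks are in place, induction on $u$ terminates and the theorem follows.
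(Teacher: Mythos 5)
Your overall strategy --- normalize so that every uncontained arc crosses the extended straight strand exactly once, all on the left, and then remove the uncontained arcs one at a time --- matches the setup the paper uses before Fact~\ref{uandc}. But the local move in your second step cannot work as described, and the obstruction is not a routine verification: it is the actual content of the theorem. Here is a clean way to see it. An uncontained arc $\alpha$ runs from a crossing $i$ (which it leaves on, say, the top side of the line containing the straight strand) to a crossing $j$ (which it reaches from the bottom side). Any arc with these boundary conditions must cross that line an \emph{odd} number of times in its interior; in the normalized diagram this count is $1$, achieved on the extended straight strand. To make $\alpha$ contained you must arrange for all of its line-crossings to lie on the straight strand itself, hence you need an \emph{odd} number of new crossings between $\alpha$ and the straight strand. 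A Reidemeister~II move between $\alpha$ and a horizontal extension of the strand produces an \emph{even} number of such crossings, so your claimed outcome (two new crossings on the strand and $\alpha$ splitting into three semicircles) is impossible: either the arc would arrive at $j$ from the wrong side, or --- what actually happens --- one of the two new crossings of the R~II move is not with the straight strand at all but with the arc attached to the strand's left endpoint. Concretely, the left end of the straight strand is not free: it is glued to the first contained semicircle, and in a valid straight diagram that semicircle is nested under the top half of $\alpha$. Dragging the endpoint leftward past the point where $\alpha$ meets the extended strand therefore unavoidably creates one crossing on the strand and one crossing, off the axis, between $\alpha$ and the dragged semicircle. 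So your check~(i) fails, and no sequence of R~II moves against the straight strand alone can repair it.

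The paper's proof resolves exactly this attachment problem, by a non-local argument rather than a local move. It \emph{cuts} the knot at the left end of the straight strand, so the strand acquires a genuinely free end that can be extended leftward through all $u$ uncontained arcs at once; this adds one on-strand crossing per uncontained arc (odd, so the parity is now right) and converts every uncontained arc into contained semicircles. The price is that the cut end $x$ must be reconnected to the new tip of the strand without leaving straight position and without changing the knot type. This is done by forming the graph $G$ whose vertices are the complementary regions and whose edges record adjacency across segments of the straight strand: counting via Fact~\ref{uandc} shows $G$ has one fewer edge than vertices, and connectivity (which holds because a disconnection would force a closed loop of semicircles, i.e.\ a split link component) makes $G$ a tree. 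The unique tree path from the region containing $x$ to the unbounded region tells you how to route the reconnecting arc so that it crosses only the straight strand, and making all of its crossings (and all the crossings created by the extension) uniformly over or uniformly under guarantees the knot type is preserved. If you want to salvage your inductive framing, the induction should be driven by this cut-and-reconnect mechanism rather than by R~II moves.
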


\begin{proof}
Given a diagram in straight position, the left end of the straight strand connects to a contained arc.  Cut the knot at the left end of the straight strand  and extend the straight strand to the left crossing all the uncontained arcs.  Make all these new crossings over (or under) crossings.  Call this cut arc $x$ and shrink it slightly.   Along with Fact \ref{uandc},  notice that there are now $c+2u-1$ crossings and $c+2u-1$ semicircles.  We must now reconnect the knot with only over (or under) crossings to not change the type of knot.

Make a graph, $G$, by placing a vertex in each region of the diagram.  There is one region for each semicircle and one more for the unbounded region, $c+2u$.  Add an edge to the graph between two vertices if the regions are adjacent through a segment of the straight strand.  There will be one edge added to the right of each crossing, $c+2u-1$.  

{\em Claim: } $G$ is connected.  

If it were not connected, then $G$ would have a nontrivial subgraph $H$ that is not connected to $G - H$.  Consider $H$, it must have a vertex in a region that is adjacent to a region with a vertex in $G-H$, through either the straight strand or a contained or uncontained arc.  If it were adjacent by a segment of the straight strand it would not be disconnected, a contradiction.  If it were adjacent through only contained or uncontained arcs, then there must be a closed loop of only contained or uncontained arcs.  This would mean we have a link, which is a contradiction, and proves the claim.

 A well known theorem from graph theory is that any graph which is connected and has one less edge than it does vertices, is a tree.  Therefore, there is a unique path from the region where the cut arc $x$ is to the unbounded region that passes only through the straight strand. Create this path starting at the end of $x$.  We connect the path to the left side of the straight strand, and make all new crossings over (or under) crossings, finishing the proof.\end{proof}


\begin{figure}
\centering
\begin{minipage}{.5\textwidth}
  \centering
\begin{tikzpicture}[scale=.3]

\draw [ultra thick] (0,0) -- (7,0);
\draw [ultra thick, dashed, gray] (0,0) -- (-2,0);
\draw [ultra thick, dashed, gray] (7,0) -- (9,0);

\draw [ultra thick] (-1,0) arc [radius=1, start angle=180, end angle= 0];
\draw [ultra thick] (5,0) arc [radius=1.5, start angle=180, end angle= 0];
\draw [ultra thick] (-1,0) arc [radius=4.5, start angle=180, end angle= 360];

\draw [ultra thick, dotted] (1,0) arc [radius=2, start angle=180, end angle= 0];

\end{tikzpicture}

  \captionof{figure}{Uncontained arcs only cross the extended straight strand once.  Replace the solid arc with the dotted arc. }
  \label{fig:twoTOone}
\end{minipage}%
\begin{minipage}{.5\textwidth}
  \centering
\begin{tikzpicture}[scale=.3]

\draw [ultra thick] (0,0) -- (7,0);
\draw [ultra thick, dashed, gray] (0,0) -- (-3,0);
\draw [ultra thick, dashed, gray] (7,0) -- (9,0);

\draw [ultra thick] (-1,0) arc [radius=1, start angle=180, end angle= 0];
\draw [ultra thick] (-1,0) arc [radius=2, start angle=180, end angle= 360];

\draw [ultra thick] (5,0) arc [radius=1.5, start angle=180, end angle= 0];
\draw [ultra thick] (5.5,0) arc [radius=1.25, start angle=180, end angle= 360];

\draw [ultra thick, dotted] (-2,0) arc [radius=3.5, start angle=180, end angle= 0];
\draw [ultra thick, dotted] (-2,0) arc [radius=3.75, start angle=180, end angle= 360];

\end{tikzpicture}
  \captionof{figure}{All uncontained arcs can be modified to cross the left side of the extended straight strand. Replace the rightmost arc with the dotted arc.}
  \label{fig:allONleft}
\end{minipage}
\end{figure}

It is clear that for any knot $K$, 
$$c(K) \leq \mathtt{str} (K) \leq \mathtt{cstr} (K),$$
where $c(K)$ is the minimum crossing number.  

\begin{definition}
A knot is {\em perfectly straight} if $c(K) = \mathtt{str} (K)$.
\end{definition}

A knot which has a minimum crossing diagram where, traversing the knot, we can meet every crossing before coming to a crossing for the second time, is perfectly straight.  Now, we discuss how many arcs can be uncontained in a straight diagram with $n$ crossings.
\newpage 
\begin{lemma}\label{lem:uncontained}
Given a nontrivial knot $K$ with $\mathtt{str} (K)=n$, there are at most $n-3$ uncontained arcs and at least four contained arcs. 
\end{lemma}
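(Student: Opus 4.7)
Fact~\ref{uandc} gives $c + u - 1 = n$, hence $c + u = n + 1$, so the two conclusions---at most $n - 3$ uncontained arcs and at least four contained arcs---are equivalent. It therefore suffices to prove $c \geq 4$.

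I plan to start with an arbitrary minimum-crossing straight diagram $D$ of the nontrivial knot $K$ and modify $D$, preserving the straight word and the crossing count, until at least four of the second-pass arcs are contained. The underlying observation is that whether a given second-pass arc is drawn as a contained semicircle or as an uncontained wrap-around is a choice in the diagram, subject only to the constraint that distinct second-pass arcs cannot intersect each other in the plane. Consequently the $(c, u)$ partition of a fixed straight word is not uniquely determined, and one can attempt to maximize $c$ within these drawing constraints.

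For the first step I would show that the arcs $A_L$ and $A_R$ adjacent to the endpoints $L$ and $R$ of the straight strand can always be chosen contained. Since $L$ and $R$ sit at the extremes of the strand, neither $A_L$ nor $A_R$ can have its endpoints interleaved with those of any other arc: every other arc has both endpoints strictly between $L$ and $R$, while $A_L$ (respectively $A_R$) has one endpoint at the leftmost (resp.\ rightmost) position. This lack of interleaving lets one draw each of $A_L$ and $A_R$ as a semicircle on an appropriate side of the strand without forcing an intersection with another arc. For the second step I would produce two further contained arcs beyond $A_L$ and $A_R$, using the fact that contained semicircles on a single side of the strand are pairwise nested or disjoint, together with the alternation of sides of the second pass at each crossing. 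A combinatorial argument on the straight word $\sigma$ should then identify at least two additional arcs that can be simultaneously drawn contained---for example, an innermost semicircle on the top and an innermost semicircle on the bottom.

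The main obstacle is the second step: showing that two additional contained arcs can always be produced and that they are genuinely distinct from $A_L$ and $A_R$. This will require a case analysis on the straight word $\sigma$, in particular on its ``interleaving complexity,'' and it is where the hypothesis that $K$ is nontrivial enters---for the unknot one has $\mathtt{str}(K) = 0$ and the bound $c \geq 4$ fails---in order to rule out the degenerate configurations in which the two extra contained arcs would coincide with $A_L$ or $A_R$ (for instance, when $\sigma(1) = n$ or $\sigma(n) = 1$).
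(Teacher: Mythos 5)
Your reduction is fine: by Fact~\ref{uandc} we have $c+u=n+1$, so the two conclusions are equivalent and it suffices to show $c\geq 4$; and your first step (the arcs at the two ends of the straight strand can be taken contained) is essentially the paper's convention argument (Figure~\ref{fig:firstArc}). But the proposal is not a proof, because the second step --- producing two \emph{more} contained arcs --- is exactly where you stop and say a case analysis ``should'' work. Your candidate mechanism (an innermost semicircle on top and one on bottom) does not obviously produce arcs distinct from $A_L$ and $A_R$, as you yourself note, and no argument is given that the degenerate configurations can be excluded. So the statement $c\geq 4$ is asserted, not established.

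The missing idea is the one the paper uses, and it is both concrete and short. Consider the first semicircle, with endpoints at the right end of the strand and at crossing $\sigma(1)$. If $\sigma(1)=n$, that crossing is nugatory and can be removed, contradicting the minimality in $\mathtt{str}(K)=n$; if $\sigma(1)=n-1$, the resulting straight word is not realizable (Figure~\ref{fig:containedSingle}). Hence $\sigma(1)\leq n-2$, so the first semicircle encloses at least two crossings and therefore at least one further arc of the second pass lies strictly inside it; any arc inside a contained semicircle is itself contained, since it cannot reach the extended straight strand without crossing that semicircle. This yields two contained arcs at the right end, and the symmetric argument at the left end yields two more, giving $c\geq 4$ without any case analysis on the interleaving pattern of $\sigma$. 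Note also that the hypotheses enter differently than you suggest: minimality of the straight diagram (not just nontriviality) is what rules out $\sigma(1)=n$, while nontriviality only guarantees $n\geq 3$ so the four arcs are distinct. Without supplying this step, or a completed version of your own, the argument has a genuine gap.
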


\begin{proof}
By convention, we draw our straight strand from left to right and the first semicircle above the straight strand.  This arc may be assumed to be contained, see Figure \ref{fig:firstArc}.  We could merely draw the first semicircle on the bottom first and then flip the diagram over the straight strand. In a straight diagram with $n$ crossings, if the first crossing in the straight word is $n$, it is a nugatory crossing and can be removed.  The crossing cannot be $n-1$ since this will produce a non-realizable straight word, see Figure \ref{fig:containedSingle}.  Thus, the largest the first entry in the straight word can be is $n-2$.   Hence, the first semi-circle is a contained arc and any arcs inside the first semi-circle must also be contained.  There is at least one arc inside the first semicircle.  Through a similar argument, we have two contained arcs at the beginning of the straight strand, but these may be on the top or on the bottom.   Thus, we have shown that there are at least four contained arcs. Let $u$ be the number of uncontained arcs and $c$ the number of contained arcs.  By Fact \ref{uandc}, $u+c-1 =n$ and thus $c = n +1 - u.$  Since $c\geq 4$, we have  $ n+ 1 - u \geq 4$ and therefore $u \leq n-3$.
\end{proof}

Note that this bound is sharp.  In fact, the figure-8 knot achieves this and generalizes for every $n= 4+3i$, for any $i\geq 0$.  See Figure \ref{fig:AcheiveBound} for examples of knots with $i=0$ and $i=1$.


\begin{figure}
\centering
\begin{minipage}{.5\textwidth}
  \centering
\begin{tikzpicture}[scale=.3]

\draw [ultra thick] (0,0) -- (7,0);
\draw [ultra thick, dashed, gray] (0,0) -- (-2,0);

\draw [ultra thick] (-1,0) arc [radius=4, start angle=180, end angle= 0];
\draw [ultra thick] (-1,0) arc [radius=1.5, start angle=180, end angle= 360];
\draw [ultra thick, dotted] (2,0) arc [radius=2.5, start angle=180, end angle= 360];
\path [fill] (7,0) circle [radius=0.06];

\end{tikzpicture}

  \captionof{figure}{The first arc never needs to be uncontained. }
  \label{fig:firstArc}
\end{minipage}%
\begin{minipage}{.5\textwidth}
  \centering
\begin{tikzpicture}[scale=.4]

\draw [ultra thick] (0,0) -- (7,0);
\draw [ultra thick, dashed] (0,0) -- (-2,0);

\draw [ultra thick] (3,0) arc [radius=2, start angle=180, end angle= 0];
\draw [ultra thick, dashed] (3,0) to (3,-1);

\draw [ultra thick, dashed] (5,.75) to (5,-.75);

\path [fill] (7,0) circle [radius=0.06];

\node [ ] at (5,-1) {$n$};
\node [ ] at (3,-1) {$n-1$};

\end{tikzpicture}
  \captionof{figure}{The first arc cannot enclose a single crossing. }
  \label{fig:containedSingle}
\end{minipage}
\end{figure}

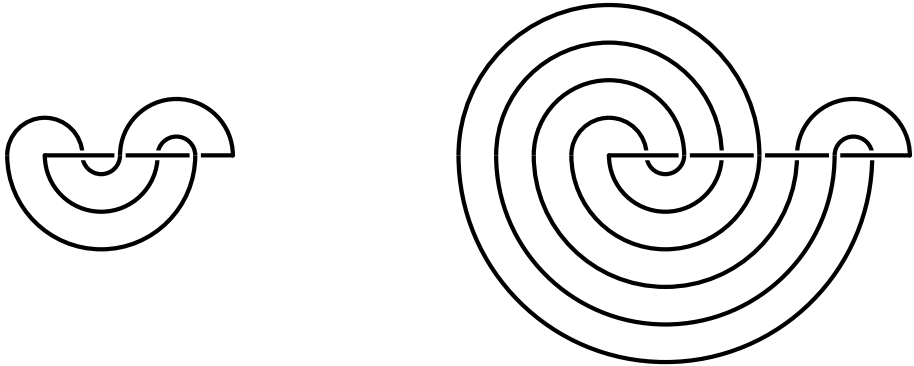
\begin{figure}
  \centering
\begin{tikzpicture}[scale=.5]

\draw [ultra thick] (2,0) arc [radius=1.5, start angle=180, end angle= 0];
\draw [ultra thick] (3,0) arc [radius=.5, start angle=180, end angle= 0];
\draw [ultra thick] (-1,0) arc [radius=1, start angle=180, end angle= 0];
\draw [ultra thick] (-1,0) arc [radius=2.5, start angle=180, end angle= 360];
\draw [ultra thick] (0,0) arc [radius=1.5, start angle=180, end angle= 360];
\draw [ultra thick] (1,0) arc [radius=.5, start angle=180, end angle= 360];

\draw [line width=0.13cm, white] (0.5,0) to (1.5,0);
\draw [line width=0.13cm, white] (2.5,0) to (3.5,0);
\draw [ultra thick] (0,0) -- (1.85,0);
\draw [ultra thick] (2.15,0) -- (3.85,0);
\draw [ultra thick] (4.15,0) -- (5,0);
\path [fill] (0,0) circle [radius=0.06];
\path [fill] (5,0) circle [radius=0.06];


\begin{scope}[xshift=15cm]
\draw [ultra thick] (5,0) arc [radius=1.5, start angle=180, end angle= 0];
\draw [ultra thick] (6,0) arc [radius=.5, start angle=180, end angle= 0];
\draw [ultra thick] (0,0) arc [radius=1.5, start angle=180, end angle= 360];
\draw [ultra thick] (1,0) arc [radius=.5, start angle=180, end angle= 360];
\draw [ultra thick] (-1,0) arc [radius=1, start angle=180, end angle= 0];
\draw [ultra thick] (-1,0) arc [radius=2.5, start angle=180, end angle= 360];
\draw [ultra thick] (-2,0) arc [radius=2, start angle=180, end angle= 0];
\draw [ultra thick] (-2,0) arc [radius=3.5, start angle=180, end angle= 360];
\draw [ultra thick] (-3,0) arc [radius=3, start angle=180, end angle= 0];
\draw [ultra thick] (-3,0) arc [radius=4.5, start angle=180, end angle= 360];
\draw [ultra thick] (-4,0) arc [radius=4, start angle=180, end angle= 0];
\draw [ultra thick] (-4,0) arc [radius=5.5, start angle=180, end angle= 360];

\draw [line width=0.13cm, white] (0.5,0) to (1.5,0);
\draw [line width=0.13cm, white] (2.5,0) to (3.5,0);
\draw [line width=0.13cm, white] (4.5,0) to (5.5,0);
\draw [line width=0.13cm, white] (6.5,0) to (7.5,0);
\draw [ultra thick] (0,0) -- (1.851,0);
\draw [ultra thick] (2.15,0) -- (3.85,0);
\draw [ultra thick] (4.15,0) -- (5.85,0);
\draw [ultra thick] (6.15,0) -- (8,0);
\path [fill] (0,0) circle [radius=0.06];
\path [fill] (8,0) circle [radius=0.06];
\end{scope}

\end{tikzpicture}
  \captionof{figure}{The Figure 8 knot and the knot $7_7$. Both of these diagrams have the most uncontained arcs possible for the number of crossings. }
  \label{fig:AcheiveBound}
\end{figure}

One should also ask how straight number behaves under connect sum.  Clearly, we have 
$$ \mathtt{str} (K\# J) \leq \mathtt{str} (K) +\mathtt{str} (J),$$
 and similarly for contained straight number, but we have no lower bound, but we can guess at one.  Indeed, straight position might have enough structure to prove what we have yet to show in crossing number.

\begin{conjecture}
Given any two knots $K, J$, $$ \mathtt{str} (K\# J) = \mathtt{str} (K) +\mathtt{str} (J)\text{ and } \mathtt{cstr} (K\# J) = \mathtt{cstr} (K) +\mathtt{cstr} (J).$$
\end{conjecture}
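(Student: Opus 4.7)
The plan is to prove the inequality $\mathtt{str}(K\#J) \leq \mathtt{str}(K) + \mathtt{str}(J)$ by an explicit construction and then attack the reverse inequality, which I expect to be a genuine open problem of the same flavor as crossing number additivity. For the easy direction, take minimal straight diagrams $D_K$ and $D_J$ with straight words $(\sigma(1),\ldots,\sigma(m))$ and $(\tau(1),\ldots,\tau(n))$, and splice them by cutting each at the right end of its straight strand and joining the strands end-to-end. The resulting diagram is straight with straight word $(\sigma(1),\ldots,\sigma(m),\tau(1)+m,\ldots,\tau(n)+m)$ and $m+n$ crossings. Since contained semicircles remain contained under this operation, the same construction gives $\mathtt{cstr}(K\#J) \leq \mathtt{cstr}(K) + \mathtt{cstr}(J)$.

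For the reverse inequality, I would start with a minimal straight diagram $D$ of $K\#J$ with $N = \mathtt{str}(K\#J)$ crossings, and let $\Sigma \subset S^3$ be a decomposing 2-sphere meeting the knot transversely in two points. The goal is to isotope $\Sigma$ so that its intersection with the projection plane is a simple arc $\gamma$ meeting $D$ in exactly three points: the two required transverse points on $K$, and a single transverse point on the straight strand lying strictly between two consecutive crossings. Once such a $\gamma$ is produced, it partitions the straight strand into two sub-strands carrying $m$ and $N-m$ crossings respectively; the two half-diagrams together with their semicircles are automatically straight diagrams of $K$ and $J$, giving $\mathtt{str}(K) + \mathtt{str}(J) \leq m + (N-m) = N$.

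The main obstacle is justifying the existence of such a compatible decomposing arc $\gamma$. This is precisely where the analogous crossing number additivity conjecture has resisted proof for over a century, so I do not expect a routine argument. The strategy I would pursue is to exploit the rigidity of the straight strand: it is, by definition, a maximal self-avoiding sub-arc of the knot, so a decomposing sphere is constrained in how it can interact with it. Techniques modeled on Menasco's treatment of incompressible surfaces in alternating link complements, combined with an innermost-disk argument applied to the intersection of $\Sigma$ with the upper and lower half-planes cut out by the straight strand, look most promising. Concretely, one would try to reduce the number of intersections of $\Sigma$ with the projection plane by isotopies that preserve the straightness of $D$, driving down the complexity until only the desired three transverse intersections remain.

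For the contained case the constraints are stronger, since every semicircle lies entirely on one side of the straight strand and does not touch the extended straight strand, so the underlying planar structure of $D$ is essentially a pair of nested families of arcs above and below. In this setting a standard innermost-disk / outermost-arc argument on $\Sigma$ should be much more tractable, and I would attack the $\mathtt{cstr}$ additivity statement first as a warm-up, using any successful argument there as a model (and a lower bound) for the harder $\mathtt{str}$ statement.
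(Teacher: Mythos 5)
The statement you are asked to prove is stated in the paper as a \emph{conjecture}: the author gives no proof of the equality, only the remark that $\mathtt{str}(K\# J)\leq \mathtt{str}(K)+\mathtt{str}(J)$ (and likewise for $\mathtt{cstr}$) is clear, and explicitly says that no lower bound is known. Your proposal reaches exactly the same point. The splicing construction for the upper bound is correct in substance --- placing the straight strand of a minimal straight diagram of $J$ immediately to the right of that of $K$, cutting each knot at the junction, and reconnecting yields a straight diagram of $K\# J$ with $m+n$ crossings whose return arcs do not interfere, and containedness is preserved. (Minor point: with the traversal convention of the paper, the straight word of the spliced diagram is $(\tau(1)+m,\ldots,\tau(n)+m,\sigma(1),\ldots,\sigma(m))$ rather than the order you wrote, but this does not affect the count.)

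The genuine gap is the entire reverse inequality, and you correctly identify it as such. Your sketch --- isotoping a decomposing sphere so that it meets a minimal straight diagram of $K\# J$ in a single point of the straight strand between consecutive crossings, via Menasco-style normal position and innermost-disk arguments --- is a reasonable research program, but as written it proves nothing: the key claim that the sphere can be normalized relative to the straight strand without increasing the crossing count is precisely the step that fails to be routine, for the same reason crossing-number additivity has resisted proof. In particular, a minimal straight diagram of $K\# J$ need not decompose as a splice of straight diagrams of $K$ and $J$, and even if the decomposing arc $\gamma$ is produced, one must still check that each half-diagram is a \emph{straight} diagram of its summand (the return arc of one summand may pass over crossings belonging to the other). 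Since the paper itself leaves this open, your proposal should be read as a correct proof of the easy inequality together with an honest assessment that the conjecture remains unproved, not as a proof of the stated equality.
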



\section{Relation to other Invariants}\label{sec:invariants}


If a given knot is not perfectly straight, we would like to know how many new crossing we are forced to introduce to put the diagram in straight position. 

\begin{lemma}
Given a knot diagram, let $m$ be the maximum number of crossings we can find before we meet a crossing the second time and let $r$ be the remaining number of crossings.  Then $\mathtt{str} (K)\leq 2^r(m+1)-1$. 
\end{lemma}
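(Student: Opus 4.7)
The plan is to proceed by induction on $r$. In the base case $r=0$, every crossing in the diagram is encountered before any crossing is seen twice, so the knot word produced by the labeling procedure takes the form $(1,2,\ldots,m,\sigma(1),\ldots,\sigma(m))$ for some permutation $\sigma\in S_m$, which is precisely a straight word by Definition~\ref{def:strword}. The given diagram is therefore already in straight position with $m$ crossings, yielding $\mathtt{str}(K)\le m=2^0(m+1)-1$.

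For the inductive step with $r\ge 1$, the goal is to construct a new diagram $D'$ of $K$ with parameters $(m',r')$ satisfying $r'=r-1$ and $m'\le 2m+1$. Applying the inductive hypothesis to $D'$ then gives
\[
\mathtt{str}(K)\le 2^{r-1}(m'+1)-1\le 2^{r-1}(2m+2)-1=2^r(m+1)-1,
\]
which closes the induction. This corresponds to the recurrence $f(m,r)\le 2f(m,r-1)+1$ with base case $f(m,0)=m$, whose closed form is exactly $2^r(m+1)-1$.

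To produce $D'$ from $D$, I would single out one of the $r$ currently-unvisited crossings, call it $X$, and use a sequence of Reidemeister moves to re-route the knot so that the initial-pass traversal of the modified diagram reaches $X$ before meeting any previously-labeled crossing for the second time. Concretely, the sub-arc emerging from just past the $m$-th crossing is pulled across the diagram so that it first threads through $X$ and then returns to its original continuation by passing through each of the first $m$ crossings in order; because the initial $m$ crossings come from a single non-self-crossing sub-arc, this push-through can be carried out using R2 moves with all newly-created crossings lying on the extended initial pass. The counts are arranged so that the new initial-pass length satisfies $m'\le 2m+1$ while $X$ is now visited, dropping $r$ by one.

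The main obstacle is making the re-routing procedure precise and bounding the number of added crossings on the initial pass: one must verify that the threading can always be arranged so that no originally-labeled crossing becomes an early repeat, and that each newly-added crossing is indeed visited before the first repeat rather than after it. This reduces to a careful planar isotopy argument in the projection sphere, facilitated by the fact that the initial $m$-crossing sub-arc has no self-crossings, so its embedded neighborhood in the diagram has a simple enough structure to admit a controlled push through the rest of $D$.
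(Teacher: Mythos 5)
Your induction skeleton is exactly the paper's argument in disguise: the paper runs the same recurrence iteratively (bring one remainder crossing onto the straight strand at the cost of $m\mapsto 2m+1$, so $f(r)=2f(r-1)+1$, $f(0)=m$, giving $2^r(m+1)-1$), and your base case and arithmetic are correct. The gap is in the only step that carries real content: the construction of $D'$. As described, it does not work. You propose to pick an \emph{arbitrary} unvisited crossing $X$ and pull the continuation arc "through" it. But $X$ is a double point between two specific strands of the diagram; a third strand cannot be made to pass through that double point, and the first-pass traversal visits $X$ only when the knot itself reaches $X$ along one of its two constituent strands. So "threading the continuation arc through $X$" is not a diagram move that makes $X$ visited early, and the R2-based push-through you invoke has nothing to latch onto.

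The paper's construction repairs exactly this. It takes $X=r_1$ to be the \emph{first} unvisited crossing reached by continuing the traversal past the straight strand, so the connecting arc $x$ is already part of the knot and arrives at $r_1$ naturally; by maximality of $m$, every crossing $x$ meets before $r_1$ lies on the straight strand. After reversing the direction of traversal if necessary, $x$ meets the straight strand at most $m/2$ times. One then drags the \emph{other} strand $y$ of $r_1$ backward along $x$ past those meetings; each pass costs two new crossings on the straight strand, landing $r_1$ adjacent to its end and yielding $m+2\cdot\frac{m}{2}+1=2m+1$. Your proposal asserts $m'\le 2m+1$ but supplies neither the "drag the transverse strand back along the connecting arc" mechanism nor the direction-reversal halving trick; without the latter, even the correct mechanism only gives $3m+1$, which breaks the stated bound. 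You flagged the re-routing as "the main obstacle," and indeed it is precisely the part of the proof that is missing.
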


\begin{proof}
For a given knot $K$, with $c(K)=n$, let $m$ be the maximum number of crossings in a row we can find that have no duplicates. Let $r=n-m$ be the remaining number of crossings.  Draw the diagram as straight as possible, which will yield $m$ straight crossings from left to right, then the remaining $r$ crossing are not on the straight strand.  From the end of the straight strand, our knot must cross some subset of the previous $m$ crossings before reaching the remainders.  If it did not, $m$ would not be the maximum number of crossings we can find before we meet a crossing the second time.  If the path to the first remainder crossing passes more than half of the $m$ straight crossings, relabel the diagram so that we start at the right of the straight strand and the $m$-th crossing is at the left.  Then we have to pass no more than half of the straight crossings to get to the new first remainder crossing. 

The first remainder crossing, $r_1$, has two arcs, let $x$ be the arc that we followed from the end of the straight strand, and $y$ be the other.  Note that $x$ crosses the straight strand at most $m/2$ times.  Perturb the arc $y$ and push it back, following along $x$ crossing the straight strand until the crossing is on the straight strand.  We introduce two new crossings each time we pass the straight strand and $r_1$ is now on the straight strand.  Therefore we now have at most $m+2\frac{m}{2}+1= 2m+1$ crossings on the straight strand and there are $r-1$ remaining crossings.  Repeating this process, but with the new number of straight crossings, we have $2(2m+1)+1=2^2m+2^2-1$ straight crossings and $r-2$ remaining.  Repeating $r-2$ more times gives us our result.
\end{proof}

Given any knot, we can see that there is a reduced diagram with $m\geq 3$, and thus $r=n-3$.  This gives us the following result.

\begin{theorem}\label{thm:strANDcross}
If $K$ is a knot with crossing number $c(K)=n$, then $\mathtt{str} (K)\leq 2^{n-1}-1$. 
\end{theorem}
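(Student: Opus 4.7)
The plan is to obtain the theorem as an immediate corollary of the preceding lemma by specializing its parameters. The lemma provides $\mathtt{str}(K) \leq 2^{r}(m+1) - 1$, and at $m = 3$ (hence $r = n - 3$) this evaluates to $2^{n-3}\cdot 4 - 1 = 2^{n-1} - 1$, which is exactly the desired bound. Thus it suffices to argue that in some reduced, minimum-crossing diagram of $K$ there is a starting position of traversal from which the first three crossings encountered are pairwise distinct, that is, $m \geq 3$.

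To establish this, I take a reduced minimum-crossing diagram with $n = c(K) \geq 3$ and examine the cyclic sequence of crossing labels $(a_1, a_2, \ldots, a_{2n})$ encountered along a fixed traversal. Reducedness forbids nugatory crossings, so the strand leaving any crossing cannot return to that same crossing before meeting another one; equivalently, $a_i \neq a_{i+1}$ for every $i$. If every cyclic triple $(a_i, a_{i+1}, a_{i+2})$ contained a repeat, then since consecutive pairs cannot agree, we would be forced into $a_i = a_{i+2}$ at every position. The sequence would therefore alternate between just two labels, forcing $n = 2$ and contradicting $n \geq 3$. Hence some starting position realizes $m \geq 3$.

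Applying the lemma with $m = 3$ and $r = n - 3$ then yields $\mathtt{str}(K) \leq 2^{n-1} - 1$. The only point that genuinely needs care is the combinatorial lower bound on $m$ above; the small-crossing cases with $c(K) \leq 7$ are already covered by Theorem~\ref{thm:4things}, which guarantees such knots are perfectly straight and hence satisfy the bound trivially, while the unknot is vacuous. The main obstacle, then, is not an analytic one but rather ensuring that the starting-arc choice ruling out the $n = 2$ alternating pattern is correctly justified via reducedness.
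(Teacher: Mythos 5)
Your proof is correct and takes essentially the same route as the paper: both obtain the bound by specializing the preceding lemma at $m\geq 3$, $r=n-3$, giving $2^{n-3}\cdot 4-1=2^{n-1}-1$. Your Gauss-sequence argument ruling out a repeat in every consecutive triple of a reduced diagram is in fact a more careful justification of the step the paper merely asserts with ``we can see that there is a reduced diagram with $m\geq 3$.''
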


Clearly, we can almost always find much higher values for $m$ for a specific diagram, but it seems challenging to increase $m$ in a significant way for all diagrams.   It would also be valuable to find a non exponential bound for straight number in terms of crossing number, if one exists.

\begin{question}
What is the upper bound for straight number in terms of the crossing number?
\end{question}

While it appears likely that straight number behaves well under connect sum, it seems very unlikely that it does so under the operation of creating satellites.  Imagine a  satellite of index $j>1$.  This will mean we have $j$ strands running along what used to be the straight strand, with $j$ strands crossing the $j$ straight strands.  To succeed in putting this knot into straight position, we expect a very high number of crossings that needs to be introduced.  It is unlikely that the bound in Theorem \ref{thm:strANDcross} is sharp, but we expect satellite knots are a class of knots which come close.

\begin{proposition}
If $K$ is a knot with $\mathtt{str} (K)=n$, then $\mathtt{cstr} (K)\leq 4n-8$. 
\end{proposition}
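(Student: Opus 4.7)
The plan is to apply the construction from the proof of Theorem~\ref{thm:cstr} to a minimum straight diagram of $K$ and count the new crossings carefully, invoking Lemma~\ref{lem:uncontained} to bound the number of uncontained arcs. Starting from a straight diagram $D$ realizing $\mathtt{str}(K) = n$, write $u$ for its uncontained arcs and $c$ for its contained arcs; by Fact~\ref{uandc} we have $c + u - 1 = n$, and by Lemma~\ref{lem:uncontained} we have $u \leq n - 3$.

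First I normalize $D$ as in Figures~\ref{fig:twoTOone} and~\ref{fig:allONleft} at no cost in crossings, then cut and extend the straight strand to the left, creating $u$ new over/under crossings, one per uncontained arc. After this stage the diagram has $n + u = c + 2u - 1$ crossings and $c + 2u$ regions, and the region-adjacency graph $G$ of the proof of Theorem~\ref{thm:cstr} is a tree with $c + 2u$ vertices and $c + 2u - 1$ edges. I then reconnect the knot along the unique path in $G$ from the region containing the cut arc $x$ to the unbounded region, each edge of this path contributing one new crossing on the extended straight strand; if the path has length $\ell$, the resulting contained straight diagram has $(n+u) + \ell$ crossings.

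The key estimate will be $\ell \leq c + 2u - 3$. Since $G$ is a tree on $c + 2u$ vertices, its diameter is at most $c + 2u - 1$, with equality only when both endpoints of the longest path are leaves. The unbounded region has degree at least $2$ in $G$: it is adjacent through the strand segment to the left of the leftmost crossing and through the segment to the right of the rightmost crossing. A parallel check at the original cut point (which after extension sits between two crossings on the strand and so contributes two distinct incident tree edges) shows the region of $x$ is also not a leaf. Since a path in a tree between two non-leaves can be extended on both ends without exceeding the diameter, its length is at most $(c + 2u - 1) - 2 = c + 2u - 3$.

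Combining using $c + u - 1 = n$ and $u \leq n - 3$,
\begin{align*}
\mathtt{cstr}(K) &\leq (n + u) + (c + 2u - 3) \\
&= 2n + 2u - 2 \;\leq\; 2n + 2(n-3) - 2 \;=\; 4n - 8.
\end{align*}
The main obstacle will be verifying rigorously that the region of $x$ has degree at least $2$ in $G$; this amounts to confirming that the original leftmost crossing and the leftmost new extension crossing lie on opposite sides of the cut point so as to give two distinct incident strand segments. Small-$n$ cases outside the hypothesis of Lemma~\ref{lem:uncontained} (e.g.\ the unknot) can be handled separately and trivially satisfy the bound.
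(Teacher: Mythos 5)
Your overall route is the paper's: normalize, cut and extend the strand (adding $u \leq n-3$ crossings by Lemma~\ref{lem:uncontained} and Fact~\ref{uandc}), then reconnect along the tree path of Theorem~\ref{thm:cstr} and count. The two arguments part ways only at the final ``$-2$'': the paper bounds the reconnection path crudely by the number of edges of $G$, getting $4n-6$, and then recovers the $-2$ by noting that if all new crossings are made over (resp.\ under) crossings, the last of them together with its neighbor forms a Reidemeister~II pair that can be cancelled. You instead try to extract the $-2$ combinatorially, by sharpening the path-length bound to $c+2u-3$ on the grounds that both endpoints of the path are non-leaves of the tree. The arithmetic from that point is fine.

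The gap is in the two non-leaf claims, and the justifications you give for them do not hold up. For the unbounded region: the strand segment to the left of the leftmost crossing pokes out past the outermost uncontained arc, so it has the unbounded region on \emph{both} sides; it is a self-loop, not an edge of $G$ (the paper's count deliberately takes only one edge per crossing, the segment to its right), so it cannot be one of your two incident edges. For the region of $x$: after shrinking, $x$ sits just inside the innermost uncontained arc's semicircle next to the old left endpoint of the strand, and whether that region meets a second strand segment depends on how the bottom (or top) semicircles nest around it --- it has nothing to do with the cut point ``sitting between two crossings,'' and a priori that region could be a bigon meeting only one segment. Both claims are in fact salvageable (for $u\geq 1$): since the top semicircles form a non-crossing family of intervals with pairwise distinct endpoints, their shadows cannot tile the strand, which forces a second segment bordering the unbounded region besides the one to the right of the rightmost crossing; a similar non-crossing argument applied to the semicircles below the old left endpoint shows the region of $x$ meets a second segment. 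But none of that is in your write-up, and you flag the second claim yourself as unresolved. As it stands the key estimate $\ell \leq c+2u-3$ is unproved, so the proof is incomplete; either supply the laminarity argument, or take the paper's shortcut and cancel two crossings with a Reidemeister~II move at the end.
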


\begin{proof}
Consider $u$, the number of uncontained arcs and the number of crossings, $n$.  We know that $u\leq n-3$ and we make a new crossing for each uncontained arc and then there are no more than $2n-3$ crossings, by Lemma \ref{lem:uncontained}.  Reconnecting the knot can at worse double the number of crossings since we will only go between any two adjacent crossing at most once.  This gives us $4n-6$.  Consider the last crossing in the diagram before we extend the straight strand.  If it is an over (under) crossing, we make all the new crossing over (under), yielding a Type II Reidemeister move.     This will eliminate two crossings, giving our result. \end{proof}

In \cite{Multi}, Adams et al. define the petal projection of a knot and the associated invariant, the  petal number of a knot, $p(K)$.   Here we give some relations between petal number and straight number.

\begin{lemma}\label{lem:uberVSstr}
If $K$ is a knot in straight position with $u$ uncontained arcs and $c$ contained arcs, then $p(K) \leq 3u+2c+1$.
\end{lemma}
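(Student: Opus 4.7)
The plan is to construct, from a straight diagram of $K$ with $n$ crossings, $c$ contained arcs, and $u$ uncontained arcs, a petal projection with at most $3u + 2c + 1$ petals.

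First I would collapse all $n$ crossings together with both endpoints of the straight strand to a single central point $P$, producing an \"ubercrossing projection. In this collapsed picture the knot visits $P$ exactly $2(n+1) = 2(c+u)$ times: each crossing contributes two visits (once traversed on the straight strand and once on the non-straight part), and each of the two endpoints contributes one. The knot is therefore broken into $2(c+u)$ loops based at $P$, consisting of $c+u$ loops coming from the straight-strand segments between consecutive visits, together with $c$ loops coming from the contained arcs and $u$ loops coming from the uncontained arcs.

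Next I would arrange these loops angularly around $P$ to obtain a genuine petal projection. The loops from the straight-strand segments and the contained arcs inherit a natural ``left-to-right'' linear ordering from the straight strand, and a planar isotopy will place them as non-nested petals around $P$. Each uncontained arc, however, wraps around an endpoint of the original straight strand and thus produces a nesting with the straight-strand loops that lie between its two crossings; I expect each such nesting can be resolved at the cost of exactly one additional petal. A final petal accounts for closing up the projection into a valid petal configuration. Combining these contributions, the total petal count is bounded by
\[
2(c+u) + u + 1 \;=\; 2c + 3u + 1,
\]
as required.

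The main technical obstacle will be the un-nesting step: one must verify that each uncontained arc forces at most one additional petal, and that no contained arcs introduce further nestings once placed in the natural linear order. This reduces to a careful planar isotopy analysis of the cyclic order of the loops around $P$, and I would rely on the convention used in the proof of Theorem~\ref{thm:cstr} that all uncontained arcs may be arranged to wrap around the same endpoint, so that the required isotopy only has to move $u$ loops past a single side of $P$.
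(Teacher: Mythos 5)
Your route is genuinely different from the paper's: you collapse the planar straight diagram to an \"ubercrossing projection at a single point $P$ and then try to un-nest loops, whereas the paper never forms a planar multi-crossing at all --- it folds the straight strand into at most $u+c-1$ semicircles alternating into the positive and negative $z$-directions, then rotates every semicircle of the diagram about the $x$-axis to its own angle, so that the projection down the $x$-axis is already a petal projection. Your loop count $2(c+u)$ is correct, and your total happens to match the paper's bookkeeping of $(u+c-1)+(2u+c+2)$, but the argument as written has a genuine gap.

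The gap is the claim that the straight-strand loops and the contained arcs can be placed as non-nested petals by planar isotopy, so that only the $u$ uncontained arcs cost extra petals. Whether two loops based at a multi-crossing are nested is read off from the cyclic order of their four ends around $P$, which is an isotopy invariant of the collapsed diagram, and contained arcs are nested in essentially every straight diagram: in the contained straight diagram of the figure-eight knot with word $(1,4,-3,2,5)$ the three upper semicircles have endpoint pairs $\{3,4\}\subset\{2,5\}\subset\{1,6\}$, a nested chain of depth three whose ends interleave around $P$ as $c\,b\,a\,a\,b\,c$; even allowing isotopy on the sphere, the middle loop must stay nested with one of the other two. The straight-strand segments are worse still: the segment between two consecutive crossings lies inside every semicircle spanning it, and after the collapse its loop cannot escape to the unbounded region without crossing those arcs. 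Each such nesting costs additional petals that the budget $3u+2c+1$ does not contain, so the bound does not follow from the argument as written. Repairing it essentially forces you into the third dimension, which is exactly what the paper's construction supplies: because its loops are separated along the $x$-axis (the direction of projection), they can be assigned distinct angles about that axis and become petals with no un-nesting step at all.
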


\begin{proof}
Given a diagram with $u$ uncontained arcs and $c$ contained arcs, let the diagram lie in the $x,y$-plane, where the straight strand corresponds to the $x$-axis.  We can perturb the straight strand slightly to keep all midpoints between every adjacent pair of crossings fixed and push the segments between the straight strand up into the positive $z$ direction if the straight strand was the overarc of the crossing, and down into the negative $z$ direction if the straight strand was the underarc.  If there are two adjacent semicircles in a row that are both in the positive or negative $z$ direction, combine them into a single semicircle. Repeat this last step until all semicircles we create from the straight strand alternate from the positive $z$ direction to the negative.  This process creates at most one new semicircle for each crossing.  Hence, $u+c-1$ new semicircles. 

Now, if we look down the $x$-axis at the $y,z$-plane, we would see a plus-shaped profile where the straight strand semicircles lie in the $z$ direction and the contained and uncontained arcs live in the $y$ direction.  We start with the straight strand's semicircles in the $z$ direction.  We let the first semicircle's edge stay stationary along the positive or negative $z$-axis, and rotate the rest of the semicircle to form a petal.  Then we pass through the $x$-axis to the diagonal quadrant in the $y,z$-plane and continue rotating all the semi circles slightly as we traverse the straight strand, until we have petals that fully fill the two diagonal quadrants.  This will create at most $u+c-1$ petals.  At the last semicircle of the straight strand, we will be tangent to the $y$-axis.  Either the first contained arc from this the end of the straight strand will continue in the same direction or the opposite direction.  If necessary, we add in a petal in the new quadrant, so they are compatible directions.  Then continue rotating all the semicircles of the uncontained and contained arcs.  This will fill the remaining two quadrants.  To finish the knot, we may need to add in another petal when we reach the last arc similar to the end of the straight strand.  This gives us at most $2u+c+2$ petals that we added in second pair of quadrants.  Hence, over all the quadrants, there are at most $3u+2c+1$ petals.
\end{proof}

The next theorem comes directly from applying Lemmae \ref{lem:uberVSstr} and \ref{lem:uncontained} and the definition of a contained straight knot.

\begin{theorem}\label{thm:strANDpetal}~
\begin{enumerate}
\item Given a knot $K$ with $\mathtt{str}(K) = n$, then $p(K) \leq 3n$. 
\item Given a knot $K$ with $\mathtt{cstr}(K) = n$, then $p(K) \leq 2n+3$. 
\end{enumerate}
\end{theorem}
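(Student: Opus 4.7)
The plan is to combine Lemma \ref{lem:uberVSstr} with the crossing count from Fact \ref{uandc} and the bound on uncontained arcs from Lemma \ref{lem:uncontained}; no new geometric construction is needed beyond what those results already supply. The key observation is that Lemma \ref{lem:uberVSstr} expresses $p(K)$ as a linear function of $u$ and $c$, while Fact \ref{uandc} gives a linear relation between $u$, $c$, and the crossing count $n$. Eliminating $c$ turns the bound into a function of $u$ alone, after which Lemma \ref{lem:uncontained} supplies the cap on $u$.

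For part (1), I would start with a straight diagram of $K$ realizing $\mathtt{str}(K)=n$. By Fact \ref{uandc} its uncontained and contained arc counts satisfy $c + u - 1 = n$, so $c = n + 1 - u$. Substituting into the bound of Lemma \ref{lem:uberVSstr} gives
$$p(K) \leq 3u + 2c + 1 = 3u + 2(n+1-u) + 1 = u + 2n + 3.$$
Lemma \ref{lem:uncontained} provides $u \leq n - 3$ for nontrivial $K$, and plugging this in yields $p(K) \leq (n-3) + 2n + 3 = 3n$, as required.

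For part (2), I would use that in a contained straight diagram no arc crosses the extended straight strand, so $u = 0$ by definition, and therefore $c = n + 1$ by Fact \ref{uandc}. Substituting directly into Lemma \ref{lem:uberVSstr} gives
$$p(K) \leq 3(0) + 2(n+1) + 1 = 2n + 3.$$

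There is no real obstacle here; the whole argument is a short substitution. The only minor care needed is at the unknot, since Lemma \ref{lem:uncontained} is stated for nontrivial knots, but that case is handled separately and does not affect the bound for any $K$ whose straight number is realized by a genuinely knotted straight diagram.
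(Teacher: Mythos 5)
Your argument is correct and is precisely the derivation the paper intends: the paper states that the theorem ``comes directly from applying'' Lemma \ref{lem:uberVSstr}, Lemma \ref{lem:uncontained}, and the definition of a contained straight diagram, and your substitution of $c = n+1-u$ from Fact \ref{uandc} into $p(K)\leq 3u+2c+1$, followed by $u\leq n-3$ for part (1) and $u=0$ for part (2), is exactly that computation written out.
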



\section{Algorithm and the Table of straight numbers}\label{sec:computation}

There are of course, elements of the symmetric group $S_n$ which do not give a straight word that is realizable.  Similar to Gauss codes and DT codes, many straight codes will produce virtual knots.  Before we give the main theorem of this section, we give a simpler version for contained straight knots.

\begin{theorem} \label{thm:cstrExistsAlg}
 Given a word of length $n$, there exists an algorithm which can detect if it is realizable as a contained straight knot in $\frac{n^2}{4}$ time.  
\end{theorem}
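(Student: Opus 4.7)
The plan is to reduce realizability to a bipartiteness check on an \emph{interleave graph} built directly from the straight word.

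First I would mark positions $0,1,\dots,n,n+1$ on the horizontal line supporting the straight strand, with positions $1,\dots,n$ being the crossings in order and $0,n+1$ the left and right endpoints. The second strand of any prospective contained straight diagram is then forced by $\sigma$: it is the concatenation of $n+1$ semi-arcs whose endpoint pairs are $\{n+1,\sigma(1)\}$, $\{\sigma(i),\sigma(i+1)\}$ for $1\le i\le n-1$, and $\{\sigma(n),0\}$. Since the diagram is contained, each such semi-arc must be drawn entirely on one side of the strand (above or below) without touching the extended strand, and since all $n$ crossings already lie on the strand itself, no two semi-arcs are allowed to meet anywhere else in the plane.

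Next I would invoke the standard ``chord diagram'' dichotomy: two semi-arcs with endpoint pairs $\{a<b\}$ and $\{c<d\}$ drawn on the \emph{same} side of the strand intersect if and only if they \emph{interleave} ($a<c<b<d$ or $c<a<d<b$), while semi-arcs on \emph{opposite} sides are automatically disjoint. Consequently, a contained straight diagram with word $\sigma$ exists if and only if the $n+1$ semi-arcs admit an $\{\text{above},\text{below}\}$-coloring in which every interleaving pair receives different colors. Equivalently, let $G_\sigma$ be the \emph{interleave graph} whose vertices are the $n+1$ semi-arcs and whose edges are the interleaving pairs; then realizability is equivalent to $G_\sigma$ being bipartite.

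The algorithm is then: construct $G_\sigma$ and run a standard BFS or DFS $2$-coloring. Building $G_\sigma$ amounts to checking the $\binom{n+1}{2}$ pairs of semi-arcs for interleaving; the $n$ consecutive pairs share an endpoint and so are automatically discarded, and an ordered sweep of positions prunes further pairs whose position intervals are disjoint. With this pruning the work fits within the claimed $\tfrac{n^2}{4}$ elementary comparisons, and the subsequent bipartiteness test contributes only linear overhead in the size of $G_\sigma$, which the same bound absorbs.

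The main obstacle I anticipate is twofold: verifying the ``same-side interleaving is the only way two semi-arcs can meet'' equivalence with the two boundary semi-arcs incident to positions $0$ and $n+1$ handled correctly, and tightening the pair-comparison bookkeeping to meet the stated $\tfrac{n^2}{4}$ bound rather than the naive $\tfrac{n^2}{2}$. The combinatorial core --- that contained straight realizability is precisely bipartiteness of $G_\sigma$ --- is the clean piece of the argument.
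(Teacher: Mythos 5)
There is a genuine gap in your reduction: you treat the above/below side of each semi-arc as a free choice to be found by $2$-coloring, but in a contained straight diagram the sides are forced. All $n$ crossings are transversal intersections of the second strand with the straight strand, so at each crossing the second strand must pass from one side of the line to the other; hence consecutive semi-arcs are forced onto opposite sides, and the entire side-assignment is the alternating one (top, bottom, top, \dots), unique up to a global reflection. The correct criterion is therefore not ``$G_\sigma$ is bipartite'' but ``the specific alternating coloring is proper for $G_\sigma$,'' i.e.\ no two semi-arcs of the same parity interleave. This is exactly what the paper checks: it splits the semi-arcs by parity into $C_{top}$ and $C_{bot}$ and tests every pair \emph{within} each class (via a cross-ratio sign test), which is also where the $\bigl\lceil\frac{n+1}{2}\bigr\rceil$- and $\bigl\lfloor\frac{n+1}{2}\bigr\rfloor$-sized classes produce the stated $\frac{n^2}{4}$ count, rather than the $\binom{n+1}{2}\approx\frac{n^2}{2}$ pairs you would examine.

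Your criterion accepts non-realizable words. For $n=3$ and $\sigma=(2,1,3)$ the semi-arcs are $[2,4],[1,2],[1,3],[0,3]$; the interleaving pairs are $\{[2,4],[1,3]\}$ and $\{[2,4],[0,3]\}$, so $G_\sigma$ is a path and is bipartite, yet $[2,4]$ and $[1,3]$ occupy positions of the same parity and interleave, so the word is not realizable as a contained straight diagram (the shared crossings at $2$ and at $1$ force $[2,4]$ and $[1,3]$ onto the same side). The repair is small: add to $G_\sigma$ the $n$ edges joining consecutive semi-arcs. Since those edges form a spanning path, any proper $2$-coloring of the augmented graph is the alternating one, and bipartiteness of the augmented graph then does coincide with the paper's condition; with that modification (and the parity-based restriction of which pairs need testing) your argument goes through and is essentially equivalent to the paper's.
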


\begin{proof}
Let $w=(x_1,x_2,\ldots,x_n)$ be a list of the $n$ integers from 1 to $n$.  To determine if $w$ is a contained straight word, we must check that the word is realizable as a classical knot or a virtual knot.  Since we are assuming that there are $n$ crossings, from the length of $w$, the virtual crossings will only occur when two semicircles intersect.  We also assume that there are no uncontained arcs in our diagram.  Let us assume the diagram lives in the $x,y$-plane with the straight strand on the $x$-axis starting at $x=0$ and ending at $x=n+1$.  Then the crossings are at $x=i$ for $i\in\{1,2,\ldots,n\}$. Thus, the first semicircle, by convention is above the straight strand, has endpoints at $n+1$ and $x_1$, which we will write as $[n+1,x_1]$.  Then we have crossed the straight strand, and will make another semicircle on the bottom with endpoints $[x_1,x_2]$.  Continuing this until we have made all the semicircles, we will have a set of circles on top, $C_{top}$ and bottom, $C_{bot}$.  Then

$$C_{top} = \{ [n+1,x_1], [x_2,x_3], \ldots, [x_{2j},x_{2j+1}],\ldots  \},$$
$$C_{bot} = \{ [x_1,x_2], [x_3,x_4], \ldots, [x_{2j-1},x_{2j}],\ldots  \},$$
and the last semicircle $[x_n,0]$ will be on the top or bottom depending on the parity of $n$.  With this setup, all we must do is check whether the semicircles in $C_{top}$ intersect each other and similarly in $C_{bot}$.  Each pair of semicircles are either nested, adjacent, or intersecting.  The only one of these options that would imply our word is not contained straight knot is if the circles intersect.  Then using the cross ratio, $cr$, on the endpoints of the semicircles will will produce either a positive or negative number.  If the semicircles are intersecting, the cross ratio will be negative and otherwise the cross ratio is positive.  $$cr([x_1,x_2],[x_3,x_4]) = \frac{(x_3-x_1)(x_4-x_2)}{(x_3-x_2)(x_4-x_1)}$$
Therefore, we just check all the pairs in $C_{top}$ and $C_{bot}$ with the cross ratio and if we only have positive values, we have verified that $w$ is realizable as a contained straight knot. 

To see that the algorithm described above runs in quadratic time, note that there are $n+1$ semicircles and these are either on top or bottom, hence $\#C_{top} = \lceil \frac{n +1}{2} \rceil$ and $\#C_{bot} = \lfloor \frac{n +1}{2} \rfloor$. The number of times we need to use the cross ratio for a set of $m$ semicircles is ${ m\choose 2 }= \frac{m(m-1)}{2}$. Thus, in total, we need $$\frac{\lceil \frac{n +1}{2} \rceil(\lceil \frac{n +1}{2} \rceil-1)}{2}+ \frac{\lfloor \frac{n +1}{2} \rfloor(\lfloor \frac{n +1}{2} \rfloor-1)}{2}$$
instances of using the cross ratio, which reduces to $\frac{n^2-1}{4}$ for $n$ odd and $\frac{n^2}{4}$ for $n$ even.  \end{proof}

\begin{theorem} \label{thm:strExistsAlg}
There exists an algorithm which can detect if a straight code is realizable as a classical knot.
\end{theorem}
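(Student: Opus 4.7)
The plan is to extend the cross-ratio test of Theorem~\ref{thm:cstrExistsAlg} to arbitrary straight codes by enumerating the side on which each arc of the second subarc lies. Given a straight word $(x_1,\ldots,x_n)$, the second subarc consists of $n+1$ sub-arcs connecting consecutive crossings (together with the two endpoints of the straight strand). By the proof of Theorem~\ref{thm:cstr} and the discussion preceding Fact~\ref{uandc}, every classical realization is sphere-isotopic to one in which each uncontained arc crosses the extended straight strand exactly once, and only on the left end. Thus each sub-arc falls into one of three types: a top semicircle (T), a bottom semicircle (B), or a single-crossing uncontained arc wrapping around the left (L). This yields a finite search space of at most $3^{n+1}$ configurations, and the algorithm declares the straight code realizable as a classical knot iff at least one configuration produces a planar diagram.

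For a fixed configuration, realizability is verified by checking pairwise non-intersection of the sub-arcs. For two T arcs (or two B arcs) the cross-ratio test of Theorem~\ref{thm:cstrExistsAlg} applies verbatim: the arcs are disjoint iff the cross-ratio of their four endpoints is positive. Two L arcs, which both wrap the same end, are disjoint iff their endpoint pairs on the strand are nested or disjoint rather than interleaved---a constant-time Boolean test. A T (or B) arc and an L arc are disjoint iff their endpoints interleave in a prescribed pattern, again checkable in constant time. T and B arcs lie in disjoint half-planes and never meet. With $O(n^2)$ pairs to inspect per configuration, each configuration is processed in $O(n^2)$ time and the whole algorithm terminates in $O(n^2\cdot 3^{n+1})$ time, which is finite as required.

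The step I expect to be the main obstacle is justifying that the ``left-wrapping, single-crossing'' convention loses no realizations: one must show that any classical diagram realizing the straight code is sphere-isotopic to one of the enumerated form. This follows by combining the simplification of Figure~\ref{fig:twoTOone} (an uncontained arc crossing the extended strand twice can be rerouted to cross it zero times) with that of Figure~\ref{fig:allONleft} (any right-wrapping uncontained arc can be pushed through the point at infinity to wrap on the left), both already used in the proof of Theorem~\ref{thm:cstr}, neither of which changes the underlying classical knot. The remaining bookkeeping of the six pairwise case-combinations TT, BB, LL, TB, TL, BL is routine. A polynomial-time alternative exists by converting the straight word into the length-$2n$ Gauss word $(1,2,\ldots,n,x_1,\ldots,x_n)$ and invoking any standard Gauss code realizability algorithm, but the enumeration above remains close in spirit to Theorem~\ref{thm:cstrExistsAlg} and suffices for the existence statement.
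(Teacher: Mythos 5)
Your proposal is essentially the paper's proof: the paper likewise enumerates all possible placements of left-wrapping, single-crossing uncontained arcs (encoded by inserting negative symbols into the straight word), runs the cross-ratio test of Theorem~\ref{thm:cstrExistsAlg} on each augmented word, and justifies the normalization by the same two sphere isotopies of Figures~\ref{fig:twoTOone} and~\ref{fig:allONleft}. The only substantive difference is bookkeeping: the paper splits each uncontained arc into a top and a bottom semicircle meeting at a negative coordinate so that every pairwise check is the same cross-ratio test, whereas you treat L arcs as a separate type with bespoke tests --- and you should restrict your $3^{n+1}$ configurations to the side-consistent ones, since the T/B labels of the contained arcs are forced once the set of uncontained arcs is chosen.
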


\begin{proof}
Let $w=(x_1,x_2,\ldots,x_n)$ be a list of the $n$ integers from 1 to $n$, where we ignore whether a crossing is over or under. First, we run the same algorithm from Theorem  \ref{thm:cstrExistsAlg}.  If it completes, we are done.  If the word is not contained straight, then it may have uncontained arcs.  There is at most $n-3$ uncontained arcs in a straight word of length $n$.  We denote these by inserting a negative number in our straight word, to signify going around the straight strand on the left.  This will let the uncontained arc which is two semicircles be distinguished as such in the word.  

For example, with the word $(2,1,4,3)$, we think of the first semicircle as $[5,2]$ coming from the end of the straight strand, which will intersect the semicircle $[1,4]$ if they are both contained.  So, we add a negative number to between 1 and 4 to signify crossing the extended straight strand.  The number we insert will always be the negative of the minimum of the previous crossing and the next crossing.  Thus, for this example, $-1$.  So we augment our word with this new number: $(2,1,-1,4,3)$.  Thus, we have the semicircle $[1,-1]$ on top and $[-1,4]$ on the bottom.  Now, we run the algorithm from Theorem \ref{thm:cstrExistsAlg} again. We repeat this process for every possible position and number of uncontained arcs in our word. One of these positions will pass this algorithm if and only if the word is realizable as a classical knot.   \end{proof}

Note, that we will never have a situation where we will augment the word with the same negative number at two points in our word, i.e., the negative number we augment our word with is completely determined by its neighbors.   For example, $(\ldots, 8, 3, 6\ldots)$ will never have need of two uncontained arcs at the positions marked by bullets, $(\ldots,8, \bul, 3, \bul, 6 \ldots)$.  One may verify this by an exhaustively checking all the possible situtations.  This is not to say that you cannot have two uncontained arcs appear in a row, see Figure \ref{fig:AcheiveBound}.  All this says is that if you do have consecutive uncontained arcs, the values in your word will either be decreasing or increasing.  This corresponds to spiraling in or out.   The word $(\ldots,8, \bul, 5, \bul, 2 \ldots)$ is a valid configuration and the augmented word would be  $(\ldots,8, -5, 5, -2, 2, \ldots)$.

Each of the parts of the following theorem are easily verified.

\begin{theorem}\label{thm:4things}
\begin{enumerate}
\item Every torus knot $T_{2,q}$ is perfectly straight.
\item Every $n$-pretzel knot is perfectly straight.
\item Every 2-bridge knot $K_{p/q}$ where the continued fraction of $p/q$ has length less than 6 is perfectly straight.  
\item Every knot with $7$ or less crossings is perfectly straight. 
\end{enumerate}
\end{theorem}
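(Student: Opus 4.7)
The plan is to verify each of the four parts through explicit construction of a minimum-crossing diagram in straight position, with a final computational check for the remaining cases in part (4). In each case the goal is to exhibit an $n$-crossing straight diagram for a knot known to satisfy $c(K)=n$, which immediately forces $\mathtt{str}(K)=n$ by the inequality $c(K)\le\mathtt{str}(K)$.

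For part (1), I would take the standard two-strand braid diagram of $T_{2,q}$ with its $q$ successive crossings, and show it is already straight. Labeling the crossings $1,2,\ldots,q$ along one strand from left to right, tracing the other strand encounters them in the order $1,2,\ldots,q$ (up to cyclic shift and the single return arc at each end), which is the straight word $(1,2,\ldots,q)$. This immediately realizes $q=c(T_{2,q})$ as a straight-position crossing count. For part (2), I would treat the standard diagram of the $n$-pretzel knot $P(a_1,\ldots,a_n)$, arranged so that the tangles lie side by side. Each two-strand twist region can be unfolded onto a horizontal strand by using an isotopy identical in spirit to the one shown in Figure~\ref{fig:AST}, so that the crossings of each twist appear consecutively on the straight strand and the connecting arcs become nested or adjacent semicircles on top and bottom. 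An explicit book-keeping of which crossings appear in which order produces the straight word directly, without introducing any new crossings.

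For part (3), the approach is to leverage the 4-plat presentation associated to the continued fraction $[a_1,\ldots,a_k]$. For each value $k=1,2,3,4,5$, I would describe an explicit straightening procedure for the corresponding rational tangle: each $a_i$-twist region contributes a consecutive block of crossings on the straight strand, and the remaining four ``bridge'' arcs close up as semicircles. The hard part here, and the reason I expect the statement is restricted to $k<6$, is controlling how the returning arcs interleave: for $k\le 5$ a careful orientation choice lets all closing arcs be placed without forcing extra crossings, whereas for $k\ge 6$ one appears to need crossings beyond the minimum to disentangle the nesting pattern. This case analysis will be the main technical obstacle and is where most of the work will lie; I would organize it by the parity of $k$ and the signs of the $a_i$.

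For part (4), parts (1)--(3) already dispatch a large fraction of the knots through $7$ crossings: the torus knots $3_1,5_1,7_1$ by (1), a number of others as pretzels by (2), and all the rational knots with short continued fractions by (3). The remaining finite list can be verified directly by applying the algorithm of Theorem~\ref{thm:strExistsAlg} to the Gauss code of each minimum-crossing diagram from Rolfsen's table, checking cyclic shifts and reversals until a straight word is produced. Since the table of knots with $\mathtt{str}(K)$ at the end of the paper already records these values, this step amounts to noting that the tabulated $\mathtt{str}(K)$ equals $c(K)$ for each $K$ with $c(K)\le 7$.
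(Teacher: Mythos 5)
Your approach---exhibiting, for each family, a minimum-crossing diagram that is already in straight position and then invoking $c(K)\le \mathtt{str}(K)$---is exactly the strategy the paper intends: its entire ``proof'' is the one-line remark that each part is easily verified, together with the observation that proving a family perfectly straight only requires finding a straight diagram with $c(K)$ crossings (and the computed table for small knots). Your proposal is therefore the same route, just spelled out in more detail than the paper itself provides.
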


Proving a family of knots is perfectly straight is relies only on finding a straight diagram with the same number of crossings.  Proving knots are not perfectly straight is not as simple.

\begin{question}
Can we find families of knots which are not perfectly straight?
\end{question}

We have constructed a python script which runs with the program SnapPy.  This script enumerates all the possible straight words, with some obvious conditions to cut down the number of these words.  Then we fed them to SnapPy which identified them for us.  The words which resisted identification were few and were tackled from other directions.  The result is the following list of the straight number of all the knots in the standard Rolfsen table.  Note that here, the negatives in the straight words correspond to the contained and uncontained arcs being under the straight strand, contrary to what negatives represent in Theorem \ref{thm:strExistsAlg}.

Some interesting points about the results in Table \ref{table1}, we notice that there are gaps in contained straight number at even numbers.  For example, there are no knots with contained straight number 4 or 6, and there are only three knots with contained straight number 8.  Does this trend continue? Next are some more in depth questions.

\begin{question}~
\begin{enumerate}
\item Is it possible to characterize what makes a knot perfectly straight from geometric or topological perspectives?
\item Is it possible to characterize when a knot $K$ has $\mathtt{str}(K)<\mathtt{cstr}(K)$ from geometric or topological perspectives?
\end{enumerate}
\end{question}

Note that the largest number to appear in this table is $\mathtt{cstr}(10_{96})=\mathtt{cstr}(10_{123})=14$.  The knot $10_{123}$ can be obtained by taking the torus knot $T_{5,3}$, which happens to be $10_{124}$, and making it alternating.  Will this process of making large torus knots alternating produce hyperbolic knots which have the largest gaps between contained straight number and crossing number?  In Jablan and Radovic's paper \cite{JR}, they note that the only 10 crossing knots they failed to find meander diagrams for are these two knots and $10_{99}$, which has $\mathtt{cstr}(10_{99}) = 13$.  The contained straight words for these knots appear in the table below.  In our work to find all the values in this table, we used approximately 100 nodes on the  high performance computing clusters located at Okinawa Institute of Science and Technology Graduate University for one whole week.  This was the most intensive computation for this work.  Note that this computation also yielded many results for 12, 13, and 14 crossing knots, which we do no include here.  

The author has also made a javascript program which draws knots in straight position.   This runs in your web browser and the user can enter their own straight word to be drawn.  This can be found on the author's website. \url{http://nick.owad.org/drawstr.html}

{\scriptsize
\begin{longtable}{|c|c|c|c|c|}
\caption{Rolfsen's table of knots with straight number, contained straight number, and examples of words which attain each value. }\label{table1}\\
\hline
\textbf{$K$} & \textbf{$\mathtt{str}$} & \textbf{$\mathtt{cstr}$ } & \textbf{Straight Word} & \textbf{Contained Straight Word} \\
\hline
\endfirsthead
\multicolumn{5}{r}%
{\tablename\ \thetable\ -- \textit{Continued from previous page}} \\
\hline
\textbf{$K$} & \textbf{$\mathtt{str}$} & \textbf{$\mathtt{cstr}$ } & \textbf{Straight Word} & \textbf{Contained Straight Word} \\\hline
\endhead
\hline \multicolumn{5}{r}{\textit{Continued on next page}} \\
\endfoot
\hline
\endlastfoot
 $3_1$ & 3 & 3 & $(1,-2,3)$ & $ -$ \\ 
 \hline
 $4_1$ & 4 & 5 & $(2,-1,\bul,4,-3)$ & $(1, 4, -3, 2, 5)$\\
 \hline
 $5_1$ & 5 & 5 & $(1, -2, 3, -4, 5) $ & $ - $ \\
 \hline
 $5_2$ & 5 & 5 & $(1, -4, 3, -2, 5) $ & $ - $ \\
 \hline
  $6_1$ & 6 & 7 & $(2, -1, \bul,6, -5, 4, -3) $ & $  (1, -4, 3, -2, -7, 6, -5) $ \\
 \hline
  $6_2$ & 6 & 7 &  $(2, -1,\bul, 4, -5, 6, -3)$ & $ (1, -2, 3, 6, -5, 4, 7) $\\
 \hline
  $6_3$ & 6 &7  & $(2, -3, 6,\bul ,-1, 4, -5)$ & $ (1, -6, -3, 4, -5, -2, 7) $ \\
 \hline
  $7_1$ & 7 & 7  & $(1, -2, 3, -4, 5, -6, 7)$ & $ - $ \\
 \hline
  $7_2$ & 7 & 7 &$ (1, -6, 5, -4, 3, -2, 7)$ & $ - $ \\
 \hline
  $7_3$ & 7 & 7 & $(1, -2, 3, -6, 5, -4, 7)$ & $  -$ \\
 \hline
  $7_4$ & 7 & 7 & $(1, -4, 3, -2, 7, -6, 5) $ & $ -  $ \\
 \hline
  $7_5$ & 7 & 7  & $(1, -6, 3, -4, 5, -2, 7)$ & $ - $ \\
 \hline
  $7_6$ & 7 &  8 &  $(3, -2,\bul, 7, -6,\bul, 1, -4, 5)$ & $ (4, -5, -8, 1, -2, 7, -6, 3)  $\\
 \hline
  $7_7$ & 7 & 9 & $(3, -2,\bul, 7, -4, 1, \bul , -6, 5)$ & $ (1, 4, -3, 2, 5, 8, -7, 6, 9)  $ \\
 \hline
  $8_1$ & 8 & 9  & $(2, -1,\bul, 8, -7, 6, -5, 4, -3)$ & $ (1, 4, -3, 2, 9, -8, 7, -6, 5)  $ \\
 \hline
  $8_2$ & 8 & 9  & $(2, -1,\bul, 4, -5, 6, -7, 8, -3) $ & $ (1, -2, 3, -4, 5, 8, -7, 6, 9) $ \\
 \hline
  $8_3$ & 8 & 9 & $(4, -3, 2, -1,\bul, 8, -7, 6, -5) $ & $ (1, -4, 3, -2, -9, 8, -7, 6, -5) $ \\
 \hline
  $8_4$ & 8 & 9 & $ (2, -3, 4, -1,\bul, 8, -7, 6, -5) $ & $ (1, 6, -3, 4, -5, 2, 9, -8, 7)  $ \\
 \hline
  $8_5$ & 8 & 9 & $ (2, -1,\bul, 6, -7, 8, -3, 4, -5)$ & $ (1, 8, -5, 6, -7, 2, -3, 4, 9)  $ \\
 \hline
  $8_6$ & 8 & 9  & $(2, -1,\bul, 6, -7, 8, -5, 4, -3) $ & $(1, -6, 3, -4, 5, -2, -9, 8, -7)  $ \\
 \hline
  $8_7$ & 8 & 9  & $(2, -3, 4, -5, 8, \bul, -1, 6, -7) $ & $ (1, -2, 3, -8, -5, 6, -7, -4, 9) $ \\
 \hline
  $8_8$ & 8 & 9 &  $(2, -3, 8, -7, 6,\bul, -1, 4, -5) $ & $ (1, -8, -5, 6, -7, -4, 3, -2, 9) $\\
 \hline
  $8_9$ & 8 & 9 & $ (2, -3, 4, -1,\bul, 6, -7, 8, -5) $ & $ (1, -8, -5, 6, -7, -2, 3, -4, 9) $ \\
 \hline
   $8_{10}$ & 8 & 9 & $(2, -3, 6, -7, 8,\bul, -1, 4, -5) $ &  $ (1, -8, -5, 6, -7, -2, 3, -4, 9) $ \\
 \hline
  $8_{11}$ & 8 &  9&  $ (2, -1,\bul, 4, -7, 6, -5, 8, -3)$ &  $ (1, -4, 3, -2, 5, 8, -7, 6, 9)$\\
 \hline
  $8_{12}$ & 8 & 10 & $ (6, -5, 2, -1,\bul, 8, -7, 4, -3)$ &  $ (4, 7, -6, 5, 10, 1, -2, -9, 8, -3) $ \\
 \hline
  $8_{13}$ & 8 & 9 & $ (2, -5, 4, -3, 8,\bul, -1, 6, -7) $ &  $ (1, -8, -3, 6, -5, 4, -7, -2, 9) $ \\
 \hline
  $8_{14}$ & 8 & 9 &  $ (2, -3, 8, -7, 4, -1,\bul, 6, -5)$ &  $ (1, -8, 3, 6, -5, 4, 7, -2, 9) $ \\
 \hline
  $8_{15}$ & 8 & 8 & $ (4, -5, 8, -1, 2, -7, 6, -3) $ &  $ -$ \\
 \hline
  $8_{16}$ & 10 &  10 &  $ (4, -5, -8, 9, -10, 1, -2, -7, 6, 3)$ &  $ - $ \\
 \hline
  $8_{17}$ & 8 & 10 & $ (2, -5, 6, -1,\bul, 8, -3, 4, -7)$ &  $ (4, -5, -8, 9, -10, -1, 2, 7, -6, -3) $  \\
 \hline
  $8_{18}$ & 10 & 11 & $(2, -3, -6, 7, 10, \bul, -1, -4, 5, 8, -9) $ & $ (1, 4, -5, -10, 9, 6, 3, -2, -7, 8, 11) $ \\
 \hline
  $8_{19}$ & 8 & 9 & $(2, -1,\bul, -6, 7, -8, 3, -4, 5) $ &  $ (1, 8, 5, -6, 7, -2, 3, -4, 9)$ \\
 \hline
  $8_{20}$ & 8 & 8 & $(4, -5, 8, 1, -2, -7, 6, 3) $ &  $ -$ \\
 \hline
  $8_{21}$ & 8 & 9 & $(4, -5, 8, -1, 2, 7, -6, -3) $ &  $ - $ \\
 \hline
  $9_{1}$ & 9 & 9 & $ (1, -2, 3, -4, 5, -6, 7, -8, 9) $ &  $ - $ \\
 \hline
  $9_{2}$ & 9 & 9 & $ (1, -2, 9, -8, 7, -6, 5, -4, 3) $ &  $ - $ \\
 \hline
  $9_{3}$ & 9 & 9 & $ (1, -2, 3, -4, 5, -8, 7, -6, 9) $ &  $ - $ \\
 \hline
  $9_{4}$ & 9 & 9 & $ (1, -2, 3, -8, 7, -6, 5, -4, 9) $ &  $ - $ \\
 \hline
  $9_{5}$ & 9 & 9 & $ (1, -4, 3, -2, 9, -8, 7, -6, 5) $ &  $ - $ \\
 \hline
  $9_{6}$ & 9 & 9 & $ (1, -4, 5, -6, 7, -8, 3, -2, 9) $ &  $ - $ \\
 \hline
  $9_{7}$ & 9 & 9 & $ (1, -2, 7, -8, 9, -6, 5, -4, 3) $ &  $ - $ \\
 \hline
  $9_{8}$ & 9 & 10 & $ (3, -2,\bul, 9, -8, 7, -6,\bul,1, -4, 5) $ &  $ (4, -7, 6, -5, -10, 1, -2, 9, -8, 3) $ \\
 \hline
  $9_{9}$ & 9 & 9 & $ (1, -2, 3, -8, 5, -6, 7, -4, 9) $ &  $ - $ \\
 \hline
  $9_{10}$ & 9 & 9 & $ (1, -4, 3, -2, 5, -8, 7, -6, 9) $ &  $ - $ \\
 \hline
  $9_{11}$ & 9 & 10 & $ (3, -4, 5, -6, 9,\bul, -2, 1,\bul, -8, 7) $ &  $ (4, -5, 6, -7, 10, 1, -2, -9, 8, -3) $ \\
 \hline
  $9_{12}$ & 9 & 10 & $ (3, -4, 9,\bul, -2, 1,\bul, -8, 7, -6, 5) $ &  $ (4, -5, -10, 1, -2, 9, -8, 7, -6, 3) $ \\
 \hline
  $9_{13}$ & 9 & 9 & $ (1, -6, 3, -4, 5, -2, 9, -8, 7) $ &  $ - $ \\
 \hline
  $9_{14}$ & 9 & 11 & $ (3, -2,\bul, 9, -4, 1,\bul, -8, 7, -6, 5) $ &  $ (1, 8, -3, -6, 5, -4, -7, 2, 11, -10, 9) $ \\
 \hline
  $9_{15}$ & 9 & 10 & $ (3, -4, 9, -8, 7,\bul, -2, 1,\bul, -6, 5) $ &  $ (4, -5, 10, -9, 8, 1, -2, -7, 6, -3) $ \\
 \hline
  $9_{16}$ & 9 & 9 & $(1, -8, 5, -6, 7, -2, 3, -4, 9)$ &  $ - $ \\
 \hline
  $9_{17}$ & 9 & 11 & $ (3, -2,\bul, 9, -4, 5, -6, 1,\bul, -8, 7) $ &  $ (1, -2, 3, 6, -5, 4, 7, 10, -9, 8, 11) $ \\
 \hline
  $9_{18}$ & 9 & 9 & $ (1, -8, 3, -6, 5, -4, 7, -2, 9) $ &  $ - $ \\
 \hline
  $9_{19}$ & 9 & 11 & $ (3, -2,\bul, 9, -8, 7, -4, 1,\bul, -6, 5) $ &  $ (1, -8, 3, 6, -5, 4, 7, -2, -11, 10, -9) $ \\
 \hline
  $9_{20}$ & 9 & 10 & $ (5, -4, \bul,1, -2, 7, -8, 9, -6, 3) $ &  $ (4, -5, -10, 1, -2, 9, -6, 7, -8, 3) $ \\
 \hline
  $9_{21}$ & 9 & 10 & $ (5, -2, 1, -8, 7, -6, 9,\bul, -4, 3) $ &  $ (4, -7, 6, -5, 10, 1, -2, -9, 8, -3) $ \\
 \hline
  $9_{22}$ & 9 & 11 & $  (3, -2,\bul, 7, -8, 9, -4, 1,\bul, -6, 5)$ &  $ (1, 10, -5, -8, 7, -6, -9, 2, -3, 4, 11) $ \\
 \hline
  $9_{23}$ & 9 & 9 & $ (3, -4, 9, -8, 5, -2, 1, -6, 7) $ &  $ - $ \\
 \hline
  $9_{24}$ & 9 & 10 & $ (5, -6, 7, -4, \bul, 1, -2, 9, -8, 3) $ &  $(4, -5, 6, -7, -10, 1, -2, 9, -8, 3)  $ \\
 \hline
  $9_{25}$ & 9 & 10 & $ (5, -4,\bul, 9, -8, 1, -2, 7, -6, 3) $ &  $ (4, 7, -6, 5, 10, -1, 2, -9, 8, -3) $ \\
 \hline
  $9_{26}$ & 9 & 11 & $ (3, -4, 5, -2,\bul, 9, -6, 1,\bul, -8, 7) $ &  $ (1, -2, 3, 10, -5, -8, 7, -6, -9, 4, 11) $ \\
 \hline
  $9_{27}$ & 9 & 11 & $ (3, -2,\bul, 9, -4, 5, -8,\bul, 1, -6, 7) $ &  $ (1, 4, -3, 2, 5, -10, -7, 8, -9, -6, 11) $ \\
 \hline
  $9_{28}$ & 9 & 10 & $ (5, -6, 9,\bul, -4,\bul, 1, -2,\bul, 7, -8,\bul, 3) $ &  $ (4, -5, 10, -1, 2, -9, -6, 7, -8, -3) $ \\
 \hline
  $9_{29}$ & 11 & 12 & $ (5, -4,\bul, -9, 8, 1, -2, -7, -10, 11, -6, 3) $ &  $ (4, -7, -10, 11, 6, -5, 12, 1, -2, 9, -8, 3) $ \\
 \hline
  $9_{30}$ & 9 & 11 & $ (5, -6, 9, -2, 1,\bul, -4, 7, -8, 3) $ &  $ (1, -2, 3, -8, 7, 4, -11, 10, -5, 6, -9) $ \\
 \hline
  $9_{31}$ & 9 & 11 & $ (3, -4, 9, \bul, -2, 5, -8,\bul, 1, -6, 7) $ &  $ (1, -10, -3, 8, 5, -6, 7, 4, -9, -2, 11) $ \\
 \hline
  $9_{32}$ & 10 & 11 & $ (2, 7, -6, 5, 8, -1,\bul, 10, -3, 4, -9) $ &  $ (1, -4, 5, 10, -9, -6, 3, -2, -7, 8, 11) $ \\
 \hline
  $9_{33}$ & 11 & 11 & $ (1, 4, -5, 6, 9, -10, 3, -2, 11, 8, -7) $ &  $ - $ \\
 \hline
  $9_{34}$ & 11 & 12 & $ (7, -4, 1, 10, -9, 2, -3, -8, 11,\bul, -6, 5) $ &  $ (4, 9, -8, -5, 12, 1, -2, -11, -6, 7, 10, -3) $ \\
 \hline
  $9_{35}$ & 9 & 9 & $ (3, -2, 1, -6, 5, -4, 9, -8, 7) $ &  $ - $ \\
 \hline
  $9_{36}$ & 9 & 10 & $  (5, -6, 7, -2, 1, -8, 9, \bul, -4, 3)$ &  $ (4, -5, 8, -9, 10, 1, -2, -7, 6, -3) $ \\
 \hline
  $9_{37}$ & 9 & 11 & $ (3, -2, 9, -6, 5, -4, 1, -8, 7) $ &  $ (1, -4, 3, -2, 5, 8, -7, 6, -11, 10, -9) $ \\
 \hline
  $9_{38}$ & 9 & 9 & $ (1, -6, 5, -2, 9, -8, 3, -4, 7)  $ &  $ - $ \\
 \hline
  $9_{39}$ & 9 & 11 & $ (3, -2,\bul, 7, -6,\bul, 1, -4, 9, -8, 5) $ &  $ (3, -2, 1, 8, -7, -4, 11, -10, 5, -6, -9) $ \\
 \hline
  $9_{40}$ & 12 & 12 & $ (6, -7, -10, 3, -2, -11, 12, -1, 4, 9, -8, -5) $ &  $ - $ \\
 \hline
  $9_{41}$ & 11 & 11 & $ (1, -6, 5, -4, -9, 10, -3, 2, 11, -8, 7) $ &  $ - $ \\
 \hline
  $9_{42}$ & 9 & 9 & $ (1, -6, 5, -2, -9, 8, -3, 4, -7) $ &  $ - $ \\
 \hline
  $9_{43}$ & 9 & 10 & $ (5, -6, 7, 2, -1, -8, 9,\bul, 4, -3) $ &  $ (4, -5, 8, -9, 10, -1, 2, 7, -6, 3) $ \\
 \hline
  $9_{44}$ & 9 & 9 & $ (1, 6, -5, -2, 9, -8, -3, 4, -7) $ &  $ - $ \\
 \hline
  $9_{45}$ & 9 & 9 & $ (1, -6, 5, -2, -9, 8, 3, -4, 7) $ &  $ - $ \\
 \hline
  $9_{46}$ & 9 & 9 & $ (3, -2, 1, -6, 5, -4, -9, 8, -7) $ &  $ - $ \\
 \hline
  $9_{47}$ & 10 & 11 & $ (2, 7, -6, -3, 10, \bul, 1, -8, -5, 4, 9) $ &  $ (1, -4, 5, 10, -9, 6, -3, 2, 7, -8, 11) $ \\
 \hline
  $9_{48}$ & 9 & 9 & $ (1, -6, 5, -2, 9, -8, -3, 4, 7) $ &  $ - $ \\
 \hline
  $9_{49}$ & 9 & 11 & $ (3, -2, \bul, -7, 6, \bul, 1, -4, 9, -8, 5) $ &  $ (1, 8, 5, -4, 9, -10, 3, -6, 7, 2, 11) $ \\
 \hline
  $10_{1}$ & 10 & 11  & $ (2, -1,\bul, 10, -9, 8, -7, 6, -5, 4, -3) $ &  $ (1, 4, -3, 2, 11, -10, 9, -8, 7, -6, 5) $ \\
 \hline
  $10_{2}$ & 10 & 11 & $ (2, -1,\bul , 4, -5, 6, -7, 8, -9, 10, -3) $ &  $ (1, -2, 3, -4, 5, -6, 7, 10, -9, 8, 11) $ \\
 \hline
  $10_{3}$ & 10 & 11 & $ (4, -3, 2, -1,\bul, 10, -9, 8, -7, 6, -5) $ &  $ (1, -4, 3, -2, -11, 10, -9, 8, -7, 6, -5) $ \\
 \hline
  $10_{4}$ & 10 & 11 & $ (2, -3, 4, -1, \bul, 10, -9, 8, -7, 6, -5) $ &  $ (1, 6, -3, 4, -5, 2, 11, -10, 9, -8, 7) $ \\
 \hline
  $10_{5}$ & 10 & 11 & $ (2, -3, 4, -5, 6, -7, 10, \bul, -1, 8, -9) $ &  $ (1, -2, 3, -4, 5, -10, -7, 8, -9, -6, 11) $ \\
 \hline
  $10_{6}$ & 10 & 11 & $ (2, -1,\bul, 6, -7, 8, -9, 10, -5, 4, -3) $ &  $(1, -8, 3, -4, 5, -6, 7, -2, -11, 10, -9) $ \\
 \hline
  $10_{7}$ & 10 & 11 & $ (2, -1, \bul, 4, -9, 8, -7, 6, -5, 10, -3) $ &  $ (1, 4, -3, 2, 5, -10, 9, -8, 7, -6, 11) $ \\
 \hline
  $10_{8}$ & 10 & 11 & $ (2, -3, 4, -5, 6, -1,\bul, 10, -9, 8, -7) $ &  $ (1, 8, -3, 4, -5, 6, -7, 2, 11, -10, 9) $ \\
 \hline
  $10_{9}$ & 10 & 11 & $ (2, -3, 4, -1,\bul, 6, -7, 8, -9, 10, -5) $ &  $ (1, -2, 3, -4, 5, 10, -7, 8, -9, 6, 11) $ \\
 \hline
  $10_{10}$ & 10 & 11 & $ (2, -7, 6, -5, 4, -3, 10, \bul, -1, 8, -9) $ &  $ (1, -6, -3, 4, -5, -2, 11, -10, 9, -8, 7) $ \\
 \hline
  $10_{11}$ & 10 & 11 & $ (4, -3, 2, -1,\bul, 8, -9, 10, -7, 6, -5) $ &  $ (1, -6, 3, -4, 5, -2, -11, 10, -9, 8, -7) $ \\
 \hline
  $10_{12}$ & 10 & 11 & $ (2, -3, 4, -5, 10, -9, 8, \bul, -1, 6, -7) $ &  $ (1, -2, 3, -10, -7, 8, -9, -6, 5, -4, 11) $ \\
 \hline
  $10_{13}$ & 10 & 12 & $ (4, -3,\bul ,10, -9, \bul,2, -1,\bul, 8, -7, 6, -5) $ &  $ (4, 7, -6, 5, 12, 1, -2, -11, 10, -9, 8, -3) $ \\
 \hline
  $10_{14}$ & 10 & 11 & $ (2, -3, 4, -5, 10, -9, 6, -1,\bul, 8, -7) $ &  $ (1, -2, 3, -10, 5, 8, -7, 6, 9, -4, 11) $ \\
 \hline
  $10_{15}$ & 10 & 11 & $ (2, -3, 10, -9, 8, \bul, -1, 4, -5, 6, -7) $ &  $ (1, -10, -5, 6, -7, 8, -9, -4, 3, -2, 11) $ \\
 \hline
  $10_{16}$ & 10 & 11 & $ (2, -5, 4, -3, 6, -1,\bul, 10, -9, 8, -7) $ &  $ (1, 8, -3, 6, -5, 4, -7, 2, 11, -10, 9) $ \\
 \hline
  $10_{17}$ & 10 & 11 & $ (2, -3, 4, -5, 10,\bul, -1, 6, -7, 8, -9) $ &  $ (1, -2, 3, -10, -5, 6, -7, 8, -9, -4, 11) $ \\
 \hline
  $10_{18}$ & 10 & 11 & $ (2, -3, 10, -9, 4, -1,\bul, 8, -7, 6, -5) $ &  $ (1, -10, 3, 8, -7, 6, -5, 4, 9, -2, 11) $ \\
 \hline
  $10_{19}$ & 10 &11 & $ (2, -3, 4, -5, 10,\bul, -1, 6, -9, 8, -7) $ &  $ (1, -2, 3, -10, -5, 8, -7, 6, -9, -4, 11) $ \\
 \hline
  $10_{20}$ & 10 & 11 & $ (2, -1, \bul, 8, -9, 10, -7, 6, -5, 4, -3) $ &  $ (1, 4, -3, 2, 9, -10, 11, -8, 7, -6, 5) $ \\
 \hline
  $10_{21}$ & 10 & 11 & $ (2, -1, \bul, 4, -5, 6, -9, 8, -7, 10, -3) $ &  $ (1, -2, 3, -6, 5, -4, 7, 10, -9, 8, 11) $ \\
 \hline
  $10_{22}$ & 10 & 11 & $ (2, -3, 4, -1, \bul, 8, -9, 10, -7, 6, -5) $ &  $ (1, -6, 3, -4, 5, -2, -9, 10, -11, 8, -7) $ \\
 \hline
  $10_{23}$ & 10 & 11 & $ (2, -3, 4, -7, 6, -5, 10, \bul, -1, 8, -9) $ &  $ (1, -4, 3, -2, 5, -10, -7, 8, -9, -6, 11)  $ \\
 \hline
  $10_{24}$ & 10 & 11 & $ (2, -1,\bul, 6, -9, 8, -7, 10, -5, 4, -3) $ &  $ (1, -8, 3, -6, 5, -4, 7, -2, -11, 10, -9) $ \\
 \hline
  $10_{25}$ & 10 & 11 & $ (2, -1, \bul, 4, -9, 6, -7, 8, -5, 10, -3) $ &  $ (1, 4, -3, 2, 5, -10, 7, -8, 9, -6, 11) $ \\
 \hline
  $10_{26}$ & 10 & 11 & $ (2, -3, 4, -1, \bul, 6, -9, 8, -7, 10, -5) $ &  $ (1, -2, 3, 10, -5, 8, -7, 6, -9, 4, 11) $ \\
 \hline
  $10_{27}$ & 10 & 11 & $ (2, -7, 4, -5, 6, -3, 10,\bul, -1, 8, -9) $ &  $ (1, -10, -3, 8, -5, 6, -7, 4, -9, -2, 11) $ \\
 \hline
  $10_{28}$ & 10 & 11 & $ (2, -5, 4, -3, 10, -9, 8,\bul, -1, 6, -7) $ &  $ (1, -4, 3, -2, -9, 10, -11, -8, 7, -6, 5) $ \\
 \hline
  $10_{29}$ & 10 & 12 & $(6, -5, 2, -1, \bul, 8, -9, 10, -7, 4, -3) $ &  $ (4, 9, -6, 7, -8, 5, 12, 1, -2, -11, 10, -3) $ \\
 \hline
  $10_{30}$ & 10 & 11 & $ (2, -5, 4, -3, 10, -9, 6, -1, \bul, 8, -7) $ &  $ (1, -8, 3, 6, -5, 4, 7, -2, 11, -10, 9) $ \\
 \hline
  $10_{31}$ & 10 & 11 & $ (2, -3, 10, -9, 8, \bul, -1, 4, -7, 6, -5) $ &  $ (1, -10, -5, 8, -7, 6, -9, -4, 3, -2, 11) $ \\
 \hline
  $10_{32}$ & 10 & 11 & $ (2, -3, 10, -9, 4, -1, \bul, 6, -7, 8, -5) $ &  $ (1, -10, 3, 8, -5, 6, -7, 4, 9, -2, 11) $ \\
 \hline
  $10_{33}$ & 10 & 11 & $ (2, -5, 4, -3, 10, \bul, -1, 6, -9, 8, -7) $ &  $ (1, -8, -3, 6, -5, 4, -7, -2, 11, -10, 9) $ \\
 \hline
  $10_{34}$ & 10 & 11 & $ (2, -3, 10, -9, 8, -7, 6, \bul, -1, 4, -5) $ &  $ (1, -10, -7, 8, -9, -6, 5, -4, 3, -2, 11) $ \\
 \hline
  $10_{35}$ & 10 & 12 & $ (8, -7, 2, -1, \bul, 10, -9, 6, -5, 4, -3) $ &  $ (4, 7, -6, 5, 12, -11, 10, 1, -2, -9, 8, -3) $ \\
 \hline
  $10_{36}$ & 10 & 11 & $ (2, -3, 10, -9, 8, -7, 4, -1, \bul, 6, -5) $ &  $ (1, -10, 5, 8, -7, 6, 9, -4, 3, -2, 11) $ \\
 \hline
  $10_{37}$ & 10 & 11 & $ (4, -5, 10, -9, 8, \bul, -3, 2, -1, 6, -7) $ &  $ (3, -4, 11, -10, 9, -8, -5, 2, -1, 6, -7) $ \\
 \hline
  $10_{38}$ & 10 & 11 & $ (4, -5, 10, -9, 6, -3, 2, -1, \bul, 8, -7) $ &  $ (3, 6, -5, 4, 11, -10, 7, -2, 1, -8, 9) $ \\
 \hline
  $10_{39}$ & 10 & 11 & $ (2, -3, 10, -9, 4, -5, 6, -1, \bul, 8, -7)  $ &  $ (1, -10, 3, -4, 5, 8, -7, 6, 9, -2, 11) $ \\
 \hline
  $10_{40}$ & 10 & 11 & $ (2, -3, 10, -9, 4, -5, 8, \bul, -1, 6, -7) $ &  $ (1, -10, 3, -8, -5, 6, -7, -4, 9, -2, 11) $ \\
 \hline
  $10_{41}$ & 10 & 12 & $ (4, -3, \bul, 10, -9, \bul, 2, -5, 6, -1, \bul, 8, -7) $ &  $ (4, -5, 6, 9, -8, 7, 12, 1, -2, -11, 10, -3) $ \\
 \hline
  $10_{42}$ & 10 & 12 & $ (4, -3, \bul, 10, -9, \bul, 2, -5, 8, -1, \bul, 6, -7) $ &  $ (4, -9, -6, 7, -8, -5, 12, 1, -2, -11, 10, -3) $ \\
 \hline
  $10_{43}$ & 10 & 12 & $ (4, -5, 10,\bul, -3, 2, \bul, -9, 8, \bul, -1, 6, -7) $ &  $ (4, -5, 12, 1, -2, -11, -8, 9, -10, -7, 6, -3) $ \\
 \hline
  $10_{44}$ & 10 & 12 & $ (4, -5, 10, \bul, -3, 2, \bul, -9, 6, \bul, -1, 8, -7) $ &  $ (4, -5, -12, 1, -2, 11, -6, -9, 8, -7, -10, 3) $ \\
 \hline
  $10_{45}$ & 10 & 13 & $ (4, -3, \bul, 10, -5, 2, \bul, -9, 6, -1, \bul, 8, -7)  $ &  $ (1, 4, -3, 2, 5, 12, -7, -10, 9, -8, -11, 6, 13) $ \\
 \hline
  $10_{46}$ & 10 & 11 & $ (2, -1, \bul, 6, -7, 8, -9, 10, -3, 4, -5) $ &  $ (1, 10, -5, 6, -7, 8, -9, 2, -3, 4, 11) $ \\
 \hline
  $10_{47}$ & 10 & 11 & $ (2, -3, 6, -7, 8, -9, 10, \bul, -1, 4, -5) $ &  $ (1, -10, 5, -6, 7, -8, 9, 2, -3, 4, 11) $ \\
 \hline
  $10_{48}$ & 10 & 11 & $ (2, -3, 8, -9, 10,\bul, -1, 4, -5, 6, -7) $ &  $ (1, -10, -5, 6, -7, 8, -9, -2, 3, -4, 11) $ \\
 \hline
  $10_{49}$ & 10 & 10 & $ (4, -5, 6, -7, 10, -1, 2, -9, 8, -3) $ &  $ - $ \\
 \hline
  $10_{50}$ & 10 & 11 & $ (2, -1, \bul, 6, -9, 8, -7, 10, -3, 4, -5) $ &  $ (1, 10, -5, 8, -7, 6, -9, 2, -3, 4, 11) $ \\
 \hline
  $10_{51}$ & 10 & 11 & $ (2, -3, 6, -9, 8, -7, 10, \bul, -1, 4, -5) $ &  $ (1, -10, 5, -8, 7, -6, 9, 2, -3, 4, 11) $ \\
 \hline
  $10_{52}$ & 10 & 11 & $ (2, -3, 8, -9, 10,\bul, -1, 4, -7, 6, -5) $ &  $ (1, -10, -5, 8, -7, 6, -9, -2, 3, -4, 11) $ \\
 \hline
  $10_{53}$ & 10 & 10 & $ (4, -7, 6, -5, 10, -1, 2, -9, 8, -3) $ &  $  -$ \\
 \hline
  $10_{54}$ & 10 & 11 & $ (4, -5, 8, -9, 10,\bul, -3, 2, -1, 6, -7) $ &  $ (3, -4, 9, -10, 11, -8, -5, 2, -1, 6, -7) $ \\
 \hline
  $10_{55}$ & 10 & 10 & $ (4, -5, 10, -9, 8, -1, 2, -7, 6, -3) $ &  $ - $ \\
 \hline
  $10_{56}$ & 10 & 11 & $ (4, -3, \bul, 8, -9, 10, -5, 2, -1, 6, -7) $ &  $ (3, -4, -9, 10, -11, 8, -5, 2, -1, 6, -7) $ \\
 \hline
  $10_{57}$ & 10 & 11 & $ (6, -7, 10, \bul, -1, 2, -5, 8, -9, 4, -3) $ &  $ (3, -4, -9, 10, -11, -8, 5, -2, 1, -6, 7) $ \\
 \hline
  $10_{58}$ & 10 & 12 & $ (6, -5, \bul, 10, -9, \bul, 2, -1, \bul, 8, -7, 4, -3) $ &  $ (6, 9, -8, 7, 12, 3, -2, 1, 4, -11, 10, -5) $ \\
 \hline
  $10_{59}$ & 10 & 12 & $ (6, -5, \bul, 10, -9, 2, -1, \bul, 4, -7, 8, -3)  $ &  $ (4, -5, 8, 11, -10, 9, 12, 1, -2, -7, 6, -3) $ \\
 \hline
  $10_{60}$ & 10 & 13 & $ (6, -5, \bul, 10, -7, \bul, 2, -1, \bul, 4, \bul, -9, 8, \bul, -3)  $ &  $ (1, 12, -7, -10, 9, -8, -11, 2, -3, -6, 5, -4, 13) $ \\
 \hline
  $10_{61}$ & 10 & 11 & $ (4, -3, 2, -1, \bul, 8, -9, 10, -5, 6, -7)$ &  $ (1, -4, 3, -2, -9, 10, -11, 8, -5, 6, -7) $ \\
 \hline
  $10_{62}$ & 10 & 11 & $ (2, -3, 4, -5, 8, -9, 10, \bul, -1, 6, -7) $ &  $ (1, -2, 3, -10, -7, 8, -9, -4, 5, -6, 11) $ \\
 \hline
  $10_{63}$ & 10 & 10 & $ (4, -5, 10, -1, 2, -9, 8, -7, 6, -3) $ &  $ - $ \\
 \hline
  $10_{64}$ & 10 & 11 & $ (2, -3, 4, -1,\bul, 8, -9, 10, -5, 6, -7) $ &  $ (1, -2, 3, 10, -7, 8, -9, 4, -5, 6, 11) $ \\
 \hline
  $10_{65}$ & 10 & 11 & $ (2, -5, 4, -3, 8, -9, 10, \bul, -1, 6, -7) $ &  $ (1, -4, 3, -2, -9, 10, -11, -8, 5, -6, 7) $ \\
 \hline
  $10_{66}$ & 10 & 10 & $ (4, -5, 10, -1, 2, -9, 6, -7, 8, -3) $ &  $ - $ \\
 \hline
  $10_{67}$ & 10 & 11 & $ (2, -3, 10, -9, 6, -5, 4, -1,\bul, 8, -7) $ &  $ (1, -10, 3, -6, 5, -4, -9, 8, -7, -2, 11) $ \\
 \hline
  $10_{68}$ & 10 & 11 & $ (2, -3, 10, \bul, -1, 6, -5, 4, -9, 8, -7) $ &  $ (1, -10, -3, 6, -5, 4, -9, 8, -7, -2, 11) $ \\
 \hline
  $10_{69}$ & 11 & 13 & $ (3, -2, 11, 6, -5, 4, 7, -10, 1, -8, 9)  $ &  $ (1, -12, -3, -6, 5, -4, -7, -10, 9, -8, -11, -2, 13) $ \\
 \hline
  $10_{70}$ & 10 & 12 & $ (6, -7, 8, -5, 2, -1, \bul, 10, -9, 4, -3) $ &  $ (4, 7, -6, 5, 12, -1, 2, -11, -8, 9, -10, -3) $ \\
 \hline
  $10_{71}$ & 10 & 12 & $ (6, -7, 10, \bul, -5, 2, -1, 8, -9, \bul, 4, -3) $ &  $ (4, 7, -6, 5, 12, -1, 2, -11, -8, 9, -10, -3) $ \\
 \hline
  $10_{72}$ & 10 & 11 & $ (2, -3, 8, -9, 10, -7, 4, -1, \bul, 6, -5) $ &  $ (1, -10, 5, 8, -7, 6, 9, -2, 3, -4, 11) $ \\
 \hline
  $10_{73}$ & 10 & 12 & $ (6, -5, \bul, 10, -7, 4, \bul, -1, 2, -9, 8, -3) $ &  $ (4, -5, 6, 9, -8, 7, -12, 1, -2, 11, -10, 3) $ \\
 \hline
  $10_{74}$ & 10 & 11 & $ (2, -1,\bul, 4, -7, 6, -5, 10, -9, 8, -3) $ &  $ (1, -4, 3, -2, 5, -8, 7, -6, -11, 10, -9) $ \\
 \hline
  $10_{75}$ & 11 & 13  & $ (3, -2, \bul, 11, -4, 5, 8, -7, 6, 1, \bul, -10, 9) $ &  $ (1, 4, -3, 2, 5, 8, -7, 6, 9, 12, -11, 10, 13)  $ \\
 \hline
  $10_{76}$ & 10 & 11 & $ (2, -1,\bul, 8, -9, 10, -7, 4, -5, 6, -3) $ &  $ (1, 4, -3, 2, 9, -10, 11, -8, 5, -6, 7) $ \\
 \hline
  $10_{77}$ & 10 & 11 & $(2, -3, 8, -9, 10, -7, 6, -1, \bul, 4, -5)  $ &  $ (1, -10, -7, 8, -9, -6, 3, -4, 5, -2, 11) $ \\
 \hline
  $10_{78}$ & 10 & 12 & $ (6, -7, 10, \bul, -5, 4,\bul, -1, 2, -9, 8, -3) $ &  $ (4, -9, -6, 7, -8, -5, -12, 1, -2, 11, -10, 3) $ \\
 \hline
  $10_{79}$ & 10 & 12 & $ (4, -5, 8, -9, 10, \bul, -1, 2, -3, 6, -7) $ &  $ (4, -5, 10, -11, 12, 1, -2, -9, -6, 7, -8, 3) $ \\
 \hline
  $10_{80}$ & 10 & 10 & $ (4, -5, 8, -9, 10, -1, 2, -7, 6, -3) $ &  $  - $ \\
 \hline
  $10_{81}$ & 10 & 12 & $ (8, -5, 4, -9, 10,\bul, -1, 2, -7, 6, -3) $ &  $ (4, 7, -6, 5, -10, 11, -12, -1, 2, -9, 8, -3) $ \\
 \hline
  $10_{82}$ & 10 & 12 & $ (2, -5, 6, -7, 8, -1, \bul, 10, -3, 4, -9) $ &  $ (4, -5, -8, 9, -10, 11, -12, -1, 2, 7, -6, -3) $ \\
 \hline
  $10_{83}$ & 10 & 12 & $ (2, -7, 6, -5, 8, -1, \bul, 10, -3, 4, -9) $ &  $ (4, -5, -8, 11, -10, 9, -12, -1, 2, 7, -6, -3) $ \\
 \hline
  $10_{84}$ & 12 & 12 & $ (4, -5, -8, 9, 12, -1, 2, 11, -10, -7, 6, -3) $ &  $ - $ \\
 \hline
  $10_{85}$ & 12 & 12 & $ (4, -5, -8, 9, -10, 11, -12, 1, -2, -7, 6, 3) $ &  $ - $ \\
 \hline
  $10_{86}$ & 12 & 12 & $ (4, -5, -8, 11, -10, 9, -12, 1, -2, -7, 6, 3) $ &  $ - $ \\
 \hline
  $10_{87}$ & 12 & 12 & $ (4, -5, 12, 9, -8, 1, -2, 7, 10, -11, -6, 3) $ &  $ - $ \\
 \hline
  $10_{88}$ & 12 & 13 & $ (2, 7, -6, -3, 12, -11, -4, 5, 8, -1, \bul, 10, -9) $ &  $ (1, 6, -5, 4, 7, 12, -11, -8, 3, -2, -9, 10, 13) $ \\
 \hline
  $10_{89}$ & 12 & 13 & $ (2, 7, -6, -3, 12, -11, 4, -5, -8, 1, \bul, -10, 9) $ &  $ (1, -6, 5, -4, -7, 8, 11, -12, 3, -2, 13, 10, -9) $ \\
 \hline
  $10_{90}$ & 10 & 12 & $ (2, -5, 6, -1, \bul, 10, -9, 8, -3, 4, -7) $ &  $ (4, -7, 6, -5, -10, 11, -12, -1, 2, 9, -8, -3) $ \\
 \hline
  $10_{91}$ & 12 & 12 & $(4, -5, 6, -7, -10, 11, -12, 1, -2, -9, 8, 3) $ &  $ - $ \\
 \hline
  $10_{92}$ & 12 & 12 & $ (4, -5, 12, 9, -8, -1, 2, 7, 10, -11, 6, -3) $ &  $  - $ \\
 \hline
  $10_{93}$ & 12 & 12 & $ (4, -7, 6, -5, -10, 11, -12, 1, -2, -9, 8, 3) $ &  $ - $ \\
 \hline
  $10_{94}$ & 10 & 12 & $ (2, -5, 6, -1, \bul, 8, -9, 10, -3, 4, -7) $ &  $ (4, -5, 6, -7, -10, 11, -12, -1, 2, 9, -8, -3) $ \\
 \hline
  $10_{95}$ & 10 & 11 & $ (2, -5, 8, -9, 4, -3, 10, \bul, -1, 6, -7) $ &  $ (1, -8, -5, 6, -7, -2, 11, -10, 3, -4, 9) $ \\
 \hline
  $10_{96}$ & 12 & 14 & $ (6, -5, \bul, -10, 9, -2, 1, \bul,  -4, -7, -12, 11, -8, 3) $ &  $ (6, -7, -14, 13, 10, -9, 8, -3, 2, 11, -12, -1, -4, 5) $ \\
 \hline
  $10_{97}$ & 11 & 11 & $ (1, -4, 7, 10, -9, 8, 3, -2, 11, -6, 5) $ &  $ - $ \\
 \hline
  $10_{98}$ & 10 & 11 & $ (2, -1, \bul, 6, -7, 10, -3, 4, -9, 8, -5) $ &  $ (1, 8, -5, 4, -9, 10, -3, 6, -7, 2, 11) $ \\
 \hline
  $10_{99}$ & 10 & 12 & $  (4, -5, 8, \bul, -1, 2, \bul, -9, 10, \bul -3, 6, -7) $ &  $ (4, -5, 12, 1, -2, -9, 10, -11, -6, 7, -8, 3) $ \\
 \hline
  $10_{100}$ & 12 & 13 & $ (2, -3, 4, 9, -10, -1, \bul, 12, 5, -6, 7, -8, -11) $ &  $ (1, -2, 3, -6, 9, -10, 11, -12, -5, 4, -13, 8, -7) $ \\
 \hline
  $10_{101}$ & 11 & 13 & $ (3, -2,\bul,  -7, 8, -9, -6, \bul, 1, -4, 11, -10, 5) $ &  $(1, 6, -5, 4, -9, 10, -3, 2, -13, 12, -11, 8, -7)$ \\
 \hline
  $10_{102}$ & 10 & 12 & $ (2, -5, 6, -1, \bul, 10, -3, 4, -9, 8, -7) $ &  $ (4, -5, -8, 9, -12, -1, 2, 11, -10, 7, -6, -3) $ \\
 \hline
  $10_{103}$ & 12 & 13 & $ (2, -3, 4, 9, -10, -1, \bul, 12, 7, -6, 5, -8, -11) $ &  $ (1, -2, 3, -6, 9, -12, 11, -10, -5, 4, -13, 8, -7) $ \\
 \hline
  $10_{104}$ & 12 & 13 & $ (2, -3, 4, 7, -8, -1, \bul, 10, -11, 12, 5, -6, -9) $ &  $ (1, -2, 3, -10, -7, 8, -9, -4, 13, -12, -5, 6, -11) $ \\
 \hline
  $10_{105}$ & 12 & 13 & $ (2, -7, 6, -5, -10, 11, -4, 3, 12, \bul, -1, 8, -9) $ &  $ (1, -2, 3, 8, -7, 6, 11, -12, 5, -4, -13, 10, -9) $ \\
 \hline
  $10_{106}$ & 10 & 13 & $ (2, -3, 4, -7, 8, -1, \bul, 10, -5, 6, -9) $ &  $ (1, -2, 3, 10, 7, -8, 9, -4, -13, 12, -5, 6, -11) $ \\
 \hline
  $10_{107}$ & 10 & 13 & $ (2, -3, 10, -7, 6, \bul, -1, 4, -9, 8, -5) $ &  $ (1, -12, -3, 8, -7, -4, 11, -10, -5, 6, -9, -2, 13) $ \\
 \hline
  $10_{108}$ & 12 & 13 & $ (2, -3, 4, 7, -8, -1, \bul, 12, -11, 10, 5, -6, -9) $ &  $ (1, -2, 3, 8, -7, -4, -13, 12, -11, 10, 5, -6, -9) $ \\
 \hline
  $10_{109}$ & 10 & 13 & $(2, -3, 6, -7, 10, \bul, -1, 4, -5, 8, -9)$ &  $ (1, -2, 3, 6, -7, 8, -11, 12, 5, -4, -13, -10, 9) $ \\
 \hline
  $10_{110}$ & 10 & 12 & $ (2, -3, 8, -7, 4, -1, \bul, 10, -9, 6, -5) $ &  $ (4, -5, -12, 9, -8, -1, 2, 7, -10, 11, -6, 3) $ \\
 \hline
  $10_{111}$ & 12 & 12 & $ (4, -5, 8, -9, -12, -1, 2, 11, -10, -7, 6, -3) $ &  $ - $ \\
 \hline
  $10_{112}$ & 12 & 13 & $ (2, -3, 4, -5, -8, 9, 12, \bul, -1, -6, 7, 10, -11) $ &  $ (1, -2, 3, 6, -7, -12, 11, 8, 5, -4, -9, 10, 13)  $ \\
 \hline
  $10_{113}$ & 12 & 12 & $ (4, 9, -8, -5, 12, -1, 2, -11, -6, 7, 10, -3) $ &  $ - $ \\
 \hline
  $10_{114}$ & 12 & 13 & $ (2, -5, 4, -3, -8, 9, 12, \bul, -1, -6, 7, 10, -11) $ &  $ (1, -4, 3, -2, -7, 8, 13, -12, -9, -6, 5, 10, -11) $ \\
 \hline
  $10_{115}$ & 10 & 13 & $ (2, -7, 6, -3, 10, \bul, -1, 8, -5, 4, -9) $ &  $ (1, -6, 5, -4, -7, -12, 11, 8, -3, 2, -9, 10, 13) $ \\
 \hline
  $10_{116}$ & 12 & 13 & $ (2, -3, 4, 7, -8, 1, \bul, -10, 11, -12, -5, 6, 9) $ &  $ (1, -4, 5, -6, -9, 10, -3, 2, 11, -12, 13, 8, -7) $ \\
 \hline
  $10_{117}$ & 12 & 12 & $ (4, -5, -8, 9, -12, 1, -2, 11, -10, -7, 6, 3) $ &  $ - $ \\
 \hline
  $10_{118}$ & 12 & 13 & $ (2, -3, 4, 7, -8, -1, \bul, 12, 5, -6, -9, 10, -11) $ &  $ (1, -2, 3, 6, -7, 8, 11, -12, -5, 4, 13, 10, -9) $ \\
 \hline
  $10_{119}$ & 12 & 13 & $ (2, -1, \bul, 6, -7, -12, 11, -10, 3, -4, -9, 8, -5) $ &  $ (1, 10, -5, 6, 9, -2, 13, -12, 3, 8, -7, 4, 11) $ \\
 \hline
  $10_{120}$ & 13 & 13 & $(1, -8, 7, 4, 11, -12, 3, -2, 13, 10, 5, -6, 9)  $ &  $ - $ \\
 \hline
  $10_{121}$ & 13  & 13 & $ (1, -4, 5, 8, 11, -12, -7, 6, 3, -2, 13, 10, -9) $ &  $ - $ \\
 \hline
  $10_{122}$ & 12 & 13 & $ (2, -3, -6, 7, 12, \bul, -1, 10, -9, -4, 5, 8, -11) $ &  $ (1, 4, -5, -10, 9, 6, 3, -2, 13, -12, -7, 8, 11) $ \\
 \hline
  $10_{123}$ & 12 & 14 & $ (2, 5, -6, -9, 10, -1, \bul, 12, -3, 4, 7, -8, -11) $ &  $ (6, -7, -10, 11, -14, 1, -2, -13, 12, -3, 4, 9, -8, -5) $ \\
 \hline
  $10_{124}$ & 10 & 11 & $ (2, -1,\bul, -6, 7, -8, 9, -10, 3, -4, 5) $ &  $ (1, 10, 5, -6, 7, -8, 9, -2, 3, -4, 11) $ \\
 \hline
  $10_{125}$ & 10 & 10 & $ (4, -5, 6, -7, 10, 1, -2, 9, -8, 3) $ &  $ - $ \\
 \hline
  $10_{126}$ & 10 & 10 & $ (4, -5, 6, -7, 10, 1, -2, -9, 8, 3) $ &  $ - $ \\
 \hline
  $10_{127}$ & 10 & 10 & $ (4, -5, 6, -7, 10, -1, 2, 9, -8, -3) $ &  $ - $ \\
 \hline
  $10_{128}$ & 10 & 11 & $ (2, -1,\bul, -6, 9, -8, 7, -10, 3, -4, 5) $ &  $ (1, 10, 5, -8, 7, -6, 9, -2, 3, -4, 11) $ \\
 \hline
  $10_{129}$ & 10 & 10 & $ (4, -7, 6, -5, 10, 1, -2, 9, -8, 3) $ &  $ - $ \\
 \hline
  $10_{130}$ & 10 & 10 & $ (4, -7, 6, -5, 10, 1, -2, -9, 8, 3) $ &  $ - $ \\
 \hline
  $10_{131}$ & 10 & 10 & $ (4, -7, 6, -5, 10, -1, 2, 9, -8, -3) $ &  $ - $ \\
 \hline
  $10_{132}$ & 10 & 10 & $ (4, -5, 10, -9, 8, 1, -2, -7, 6, 3) $ &  $ - $ \\
 \hline
  $10_{133}$ & 10 & 10 & $(4, -5, 10, -9, 8, -1, 2, 7, -6, -3) $ &  $ - $ \\
 \hline
  $10_{134}$ & 10 & 11 & $ (4, -3, \bul, -8, 9, -10, 5, -2, 1, -6, 7) $ &  $ (3, -4, 9, -10, 11, 8, 5, -2, 1, -6, 7) $ \\
 \hline
  $10_{135}$ & 10 & 10 & $ (4, -5, 10, -9, 8, 1, -2, 7, -6, 3) $ &  $ - $ \\
 \hline
  $10_{136}$ & 10 & 11 & $ (2, 7, -6, -3, 10, \bul -1, -8, 5, -4, -9) $ &  $ (1, 4, -5, -10, 9, -6, 3, -2, -7, 8, 11) $ \\
 \hline
  $10_{137}$ & 10 & 11 & $ (2, -7, 6, -3, -10, \bul, 1, 8, -5, 4, 9) $ &  $ (1, -6, 5, -4, -9, 10, 3, -2, 11, 8, -7) $ \\
 \hline
  $10_{138}$ & 10 & 12 & $ (6, -5, \bul, 10, -9, -2, 1, \bul, -4, 7, -8, 3) $ &  $ (4, -5, 8, 11, -10, 9, 12, -1, 2, 7, -6, 3) $ \\
 \hline
  $10_{139}$ & 10 & 11 & $ (2, -3, 4, -1, \bul, -8, 9, -10, 5, -6, 7) $ &  $ (1, -2, 3, 10, 7, -8, 9, -4, 5, -6, 11) $ \\
 \hline
  $10_{140}$ & 10 & 10 & $ (4, -5, 10, 1, -2, -9, 8, -7, 6, 3) $ &  $ - $ \\
 \hline
  $10_{141}$ & 10 & 10 & $ (4, -5, 10, -1, 2, 9, -6, 7, -8, -3) $ &  $ -  $ \\
 \hline
  $10_{142}$ & 10 & 11 & $ (2, -5, 4, -3, 8, -9, 10, \bul, 1, -6, 7) $ &  $ (1, -4, 3, -2, 9, -10, 11, 8, 5, -6, 7) $ \\
 \hline
  $10_{143}$ & 10 & 10 & $ (4, -5, 10, 1, -2, -9, 6, -7, 8, 3) $ &  $ - $ \\
 \hline
  $10_{144}$ & 10 & 10 & $ (4, -5, 10, -1, 2, 9, -8, 7, -6, -3) $ &  $ - $ \\
 \hline
  $10_{145}$ & 10 & 11 & $ (2, -7, 6, -3, -10, \bul, 1, -8, 5, -4, 9) $ &  $ (1, -10, -3, -6, 5, -4, 9, -8, 7, -2, 11) $ \\
 \hline
  $10_{146}$ & 10 & 11 & $ (2, -3, -8, 7, -4, -1, \bul, 10, -9, -6, 5) $ &  $ (1, -10, 3, 6, -5, 4, -9, 8, -7, -2, 11) $ \\
 \hline
  $10_{147}$ & 10 & 11 & $ (2, -1,\bul, -6, 7, 10, -3, 4, -9, 8, -5) $ &  $ (1, -10, -3, -6, 5, -4, -9, 8, -7, -2, 11) $ \\
 \hline
  $10_{148}$ & 10 & 10 & $ (4, -5, 8, -9, 10, 1, -2, -7, 6, 3) $ &  $ - $ \\
 \hline
  $10_{149}$ & 10 & 10 & $ (4, -5, 8, -9, 10, -1, 2, 7, -6, -3) $ &  $ - $ \\
 \hline
  $10_{150}$ & 10 & 10 & $ (4, -5, -8, 9, -10, 1, -2, 7, -6, 3) $ &  $ - $ \\
 \hline
  $10_{151}$ & 10 & 10 & $ (4, -5, -8, 9, -10, -1, 2, -7, 6, -3) $ &  $ - $ \\
 \hline
  $10_{152}$ & 10 & 12 & $ (4, -5, 8, -9, 10, \bul, 1, -2, 3, -6, 7) $ &  $ (4, -5, 10, -11, 12, -1, 2, 9, 6, -7, 8, -3) $ \\
 \hline
  $10_{153}$ & 10 & 10 & $ (4, -5, 8, -9, 10, 1, -2, 7, -6, 3) $ &  $ - $ \\
 \hline
  $10_{154}$ & 10 & 12 & $ (8, -5, 4, -9, 10, \bul, 1, -2, 7, -6, 3) $ &  $ (4, 7, -6, 5, -10, 11, -12, 1, -2, 9, -8, 3) $ \\
 \hline
  $10_{155}$ & 10 & 11 & $ (2, -3, 4, 7, -8, -1, \bul, 10, -5, 6, -9) $ &  $ (1, -2, 3, 8, -7, -4, -11, 10, -5, 6, -9) $ \\
 \hline
  $10_{156}$ & 10 & 11 & $ (2, -1, \bul, 6, -7, -10, 3, -4, 9, -8, -5) $ &  $ (1, -8, -5, 4, -9, 10, -3, 6, -7, -2, 11) $ \\
 \hline
  $10_{157}$ & 10 & 12 & $ (4, -5, -8, \bul, 1, -2,\bul, -9, 10, \bul, -3, -6, 7) $ &  $ (4, -5, 12, 9, -8, -1, 2, -7, 10, -11, 6, 3) $ \\
 \hline
  $10_{158}$ & 10 & 11 & $ (2, -3, -8, 7, 4, -1, \bul, 10, -9, 6, -5) $ &  $ (1, 4, -5, 6, 9, -10, -3, 2, 11, -8, 7) $ \\
 \hline
  $10_{159}$ & 10 & 11 & $ (2, -3, 4, 7, -8, 1, \bul, -10, 5, -6, 9) $ &  $ (1, -4, 5, 10, -9, -6, -3, 2, 7, -8, 11) $ \\
 \hline
  $10_{160}$ & 10 & 11 & $ (2, -7, 6, -3, 10, \bul, 1, -8, -5, 4, 9) $ &  $ (1, -2, 3, 8, -7, -4, 11, -10, 5, -6, -9) $ \\
 \hline
  $10_{161}$ & 10 & 11 & $ (2, -3, 4, -7, 8, 1, \bul, -10, 5, -6, 9) $ &  $ (1, -2, 3, -8, 7, -4, 11, -10, 5, -6, -9) $ \\
 \hline
  $10_{162}$ & 10 & 11 & $ (2, -3, 4, -7, 8, 1, \bul, -10, 5, -6, 9) $ &  $ (1, -2, 3, -8, 7, -4, 11, -10, 5, -6, -9) $ \\
 \hline
  $10_{163}$ & 10 & 11 & $ (2, -5, 6, -1, \bul, 10, 3, -4, -9, 8, -7) $ &  $ (3, -2, 1, 8, -7, -4, -11, 10, -5, 6, -9) $ \\
 \hline
  $10_{164}$ & 10 & 11 & $ (2, -7, 6, -3, 10, \bul, -1, 8, 5, -4, -9) $ &  $ (1, 4, -5, -10, 9, 6, -3, 2, -7, 8, 11) $ \\
 \hline
  $10_{165}$ & 11 & 11 & $ (1, 6, -5, 4, -9, 10, 3, -2, 11, -8, 7) $ &  $ - $ \\
 \hline
\end{longtable}
}




\end{document}